\newtheorem{theorem}{Theorem}[section]
\newtheorem{lemma}[theorem]{Lemma}
\newtheorem{corollary}[theorem]{Corollary}
\newtheorem{proposition}[theorem]{Proposition}
\theoremstyle{definition}
\theoremstyle{remark}
\newtheorem{remark}{Remark}
\numberwithin{equation}{section}
\newcommand{\C}{{\mathbb C}}
\newcommand{\F}{{\mathbb F}}
\newcommand{\R}{{\mathbb R}}
\newcommand{\Z}{{\mathbb Z}}
\newcommand{\1}{{\mathbf 1}}
\newcommand{\eps}{\varepsilon}
\newcommand{\PGL}{\operatorname{PGL}}
\newcommand{\Kl}{\mathrm{Kl}_2}
\renewcommand{\gcd}{}
\renewcommand{\P}{\mathbb{P}}
\newcommand{\n}{\mathrm{n}}
\renewcommand{\n}{\mathrm{n}}
\renewcommand{\pmod}[1]{\,\,(\mathrm{mod}\,{#1})}
\title{The divisor function along arithmetic progressions and binary cubic polynomials}
\author{Lasse Grimmelt}
\address{Isaac Newton Institute for Mathematical Sciences, University of Cambridge,
20 Clarkson Road,
Cambridge,
CB3 0EH
United Kingdom}
\email{lasse.grimmelt@gmail.com}
\author{Jori Merikoski}
\address{Department of Mathematics and Statistics, University of Turku, Yliopistonmäki, 20014 Turku, Finland}
\email{jori.merikoski@gmail.com}
\date{}
\subjclass[2020]{11N37}
\begin{document}
\begin{abstract}
    We prove a new equidistribution estimate for the divisor function in arithmetic progression to moduli that have two small factors. We give two applications. 
    First, we show an asymptotic formula for the divisor function over arithmetic progressions to almost all moduli of exponent $2/3$. Second, we show an asymptotic formula for the divisor function along the nonhomogeneous binary cubic polynomial $X Y^2+1$. 
    \end{abstract}
\maketitle

\section{Introduction}
We expect the divisor function $d(n)$ to be equidistributed over primitive residue classes. To measure this, we define for $(a,q)=1$ and $X> 1$ the well-studied discrepancy
\begin{align*}
    \Delta(X;q,a) :=   \sum_{ \substack{n \leq X \\n \equiv a \pmod{q} }}  d(n)  - \frac{1}{\varphi(q)} \sum_{ \substack{n \leq X \\ (n,q)=1 }}  d(n) .
\end{align*}
For applications it is important to have estimates uniformly in the size of the modulus $q$. Selberg (unpublished) and Hooley \cite{Hooley} independently proved by using the Weil bound for Kloosterman sums that $|\Delta(X;q,a)| \leq X^{o(1)} (q^{1/2}+X^{1/3})$, which is non-trivial in the range $q < X^{2/3-\eps}$.  Fouvry \cite{Fouvry} has shown that $|\Delta(X;q,a)|$ is small on average over $q \in [X^{2/3+\eps},X^{1-\eps}]$. 

These two results leave a gap for moduli in $[X^{2/3-\eps}, X^{2/3+\eps}]$.
Fouvry and Iwaniec \cite{FouvryIwaniec} bound $|\Delta(X;q,a)|$ on average over moduli $q$ with a specific factorization which covers a positive proportion of all moduli in $[X^{2/3-\eps}, X^{2/3+\eps}]$. Cf. also the works of Irving \cite{Irving}, Khan \cite{Khan}, and Liu, Shparlinski, and Zhang \cite{LSZ} for point-wise bounds for more specific types of moduli.

In this paper we prove a new estimate for moduli $q$ that have two small factors, on average over the smaller factor (cf. Theorems \ref{Thm:divisorAP} and \ref{Thm:divisorAPsquares}). This allows us for the first time to obtain a non-trivial bound for $|\Delta(X;q,a)|$ for almost all moduli of size $[X^{2/3-\eps}, X^{2/3+\eps}]$. 
\begin{theorem} \label{thm:APexceptional}
There exists a constant $B> 1$ such that uniformly for $\eps >0$, $a \in \Z$, and $Q < X^{2/3+\eps}$, we have
    \begin{align*}
     \Delta(X;q,a)  \ll \frac{X^{1- \eps} (\log X)^{B}}{q}
    \end{align*} 
    for all but $O( \sqrt{\eps} Q) $ moduli $q\in (Q,2Q]$ with $(a,q)=1$.
\end{theorem}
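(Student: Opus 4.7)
The proof reduces to the critical range $Q \asymp X^{2/3}$. For $Q \leq X^{2/3-2\eps}$ the Selberg--Hooley bound $|\Delta(X;q,a)| \leq X^{o(1)}(q^{1/2}+X^{1/3}) \ll X^{1/3+o(1)}$ already gives $|\Delta(X;q,a)| \ll X^{1-\eps}/q$ pointwise with no exceptional set. For $Q \in [X^{2/3+\eps}, X^{1-\eps}]$, Fouvry's mean-value estimate yields the required average bound with $o(Q)$ exceptional moduli. Thus the real task is $Q \in [X^{2/3-2\eps}, X^{2/3+\eps}]$, where one must combine the new Theorems \ref{Thm:divisorAP}--\ref{Thm:divisorAPsquares} with an anatomy-of-integers argument.

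In the critical range I would proceed as follows. Fix a small parameter $\delta = c\sqrt{\eps}$ and choose dyadic scales $D_1, D_2$ with $D_1, D_2 = X^{\Theta(\delta)}$ falling inside the range in which Theorem \ref{Thm:divisorAP} (or Theorem \ref{Thm:divisorAPsquares}, as needed) gives a power saving on average in the smaller factor. Call $q \in (Q, 2Q]$ \emph{well-factored} if $q = d_1 d_2 m$ with $d_1 \sim D_1$, $d_2 \sim D_2$, and $(d_1 d_2, a m)=1$. By a quantitative version of Ford's theorem on the multiplication table (with refinements due to Tenenbaum), the set of $q \in (Q,2Q]$ admitting no such factorization has size $O(\sqrt{\eps} Q)$: the loss comes from integers lacking a divisor in a logarithmically narrow window, and choosing the window of logarithmic width $\asymp \sqrt{\eps}$ produces exactly this density.

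For each well-factored $q$ fix a canonical factorization $q = d_1 d_2 m$. Theorem \ref{Thm:divisorAP}, after dyadic decomposition in $d_2, m$ and insertion of the coprimality conditions via a small sieve, yields a mean bound of the shape
\begin{align*}
\sum_{\substack{d_1 \sim D_1 \\ (d_1, a d_2 m) = 1}} |\Delta(X; d_1 d_2 m, a)| \ll D_1 \cdot \frac{X^{1-2\eps}(\log X)^{B}}{d_2 m}.
\end{align*}
Chebyshev's inequality then shows that, for each fixed $(d_2, m)$, at most $O(X^{-\eps} D_1)$ values of $d_1$ can fail the pointwise bound $|\Delta(X; d_1 d_2 m, a)| \leq X^{1-\eps}(\log X)^B/q$. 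Summing over the $O(Q/(D_1 D_2))$ choices of $(d_2, m)$ and the $O((\log X)^{O(1)})$ dyadic decompositions produces an exceptional subset of the well-factored moduli of size $o(\sqrt{\eps} Q)$, and adding in the non-well-factored moduli gives the stated $O(\sqrt{\eps} Q)$ bound.

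The main obstacle, I expect, is aligning the two ingredients: the range of admissible $D_1, D_2$ in Theorem \ref{Thm:divisorAP} constrains how narrow the logarithmic window can be, while the density of integers lacking a divisor in a window of logarithmic width $\delta$ is only $O(\delta)$ when $\delta$ is not too small. Optimizing these two constraints produces the $\sqrt{\eps}$ dependence in the theorem: a wider window would eat into the power saving $X^{-\eps}$ from Theorem \ref{Thm:divisorAP}, whereas a narrower one would leave too many moduli unfactored. A secondary and more routine point is choosing the canonical factorization so as to avoid double counting over $q$ and to respect the global condition $(a, q) = 1$; this is handled by a standard ordering on prime divisors and a small-sieve removal of common prime factors.
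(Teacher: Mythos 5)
Your overall strategy is the same as the paper's: restrict to moduli with two factors of prescribed sizes, apply Theorem \ref{Thm:divisorAP} with the average taken over the smaller factor, finish by Chebyshev, and absorb the unfactorable moduli into the exceptional set. But there is a genuine gap in the anatomy step. You require \emph{both} factors to lie in windows of logarithmic width $\asymp\sqrt{\eps}$, i.e.\ $d_1\in(X^{\alpha_1},X^{\alpha_1+\sqrt{\eps}}]$ and $d_2\in(X^{\alpha_2},X^{\alpha_2+\sqrt{\eps}}]$. The density of integers lacking a prime factor (hence, up to a smaller error, a divisor) in $(X^{\alpha},X^{\beta}]$ is $\asymp\alpha/\beta$, so it is $O(\sqrt{\eps})$ only when $\alpha=O(\eps)$. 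That is fine for the first window, but the second factor must be larger than the first for the $q$-van der Corput step to save anything ($r$ must exceed $U$ in \eqref{eq:powerranges}), so $\alpha_2\gg\sqrt{\eps}$, and then the proportion of $q$ with no divisor in $(X^{\alpha_2},X^{\alpha_2+\sqrt{\eps}}]$ is bounded below by a positive constant, not $O(\sqrt{\eps})$. (Invoking Ford's multiplication-table theorem points the wrong way: for a \emph{narrow} window most integers have no divisor there.) As written, your well-factored set misses a positive proportion of $(Q,2Q]$ and the conclusion fails. The paper's fix is to take the second factor to be a prime in the \emph{wide} range $(P_2,Q_2]=(X^{2\sqrt{\eps}},X^{1/12}]$, so that by Mertens the exceptional density is again $\asymp\log P_2/\log Q_2\asymp\sqrt{\eps}$; one must then check that Theorem \ref{Thm:divisorAP} saves a power of $X$ uniformly for $r$ all the way up to $X^{1/12}$, which is exactly what the explicit four-term bound $\mathcal{L}_{r,s}$ delivers. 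Working with prime factors rather than general divisors also disposes of the coprimality and square-freeness hypotheses of Theorem \ref{Thm:divisorAP} that you defer to a ``small sieve''.

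Two smaller points. First, your Chebyshev bookkeeping is internally inconsistent by a factor of $D_1$: with the stated mean bound $\sum_{d_1\sim D_1}|\Delta|\ll D_1X^{1-2\eps}(\log X)^B/(d_2m)$ and the threshold $X^{1-\eps}(\log X)^B/q$ with $q=d_1d_2m$, the number of bad $d_1$ per class is $\ll D_1^2X^{-\eps}$, which does not sum to $o(\sqrt{\eps}Q)$ since $D_1\geq X^{\eps}$ is forced by \eqref{eq:powerranges}. The clean route (taken in the paper) is to bound $\sum_{q\in\mathcal{Q}}|\Delta(X;q,a)|\ll X^{1-\eps}(\log X)^{O(1)}$ in one go and apply Markov once, using $1/\log X\leq\sqrt{\eps}$ in the admissible range of $\eps$. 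Second, the preliminary reductions are unnecessary: the paper's argument covers all $Q<X^{2/3+\eps}$ uniformly, and the Selberg--Hooley bound carries an $X^{o(1)}$ (divisor-function) factor that is not $(\log X)^{O(1)}$, so it does not give the pointwise claim for free when $\eps$ is as small as $\log\log X/\log X$.
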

In particular, for any $A > 0$ and $Q \leq X^{2/3} (\log X)^A$ we have
   \begin{align*}
     \Delta(X;q,a) \ll \frac{X }{q (\log X)^{A}}
    \end{align*}
for all but $\ll_A \frac{ (\log \log X)^{1/2}}{ (\log X)^{1/2}} Q$ moduli $q \in (Q,2Q]$ with $(a,q)=1$.

As a second application, we consider the divisor function over binary cubic polynomials. Previously an asymptotic formula has been shown by Greaves \cite{greaves} for binary cubic forms (cf.  also the work of Browning \cite{browning}). We are for the first time able to address a nonhomogeneous binary cubic polynomial.
\begin{theorem} \label{thm:binarycubic}
  For $C(X,Y)=X Y^2+1$  we have
    \begin{align*}
        \sum_{n_1,n_2 \leq X} d(C(n_1,n_2)) = \Bigl(1+ O\Bigl( \Bigl(\frac{\log \log X}{ \log X} \Bigr)^{1/2} \Bigr)\Bigr) \frac{3}{\zeta(2)} X^2 \log X. 
    \end{align*}
\end{theorem}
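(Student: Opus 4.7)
The plan is to reduce the double sum to a divisor sum in arithmetic progressions with square modulus and then feed in the paper's new equidistribution estimate.

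Fix $n_2$ and rewrite the inner sum. Setting $m = n_1 n_2^2 + 1$, one has
\[
\sum_{n_1 \leq X} d(n_1 n_2^2 + 1) = \sum_{\substack{n_2^2 < m \leq X n_2^2 + 1 \\ m \equiv 1 \pmod{n_2^2}}} d(m) = M(n_2) + E(n_2),
\]
where
\[
M(n_2) := \frac{1}{\varphi(n_2^2)} \sum_{\substack{m \leq X n_2^2 \\ (m, n_2) = 1}} d(m), \qquad E(n_2) := \sum_{n_1 \leq X} d(n_1 n_2^2 + 1) - M(n_2),
\]
and $|E(n_2)| \leq |\Delta(X n_2^2; n_2^2, 1)| + O(1)$. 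A standard Perron/contour evaluation of $\zeta(s)^2 \prod_{p \mid n_2}(1 - p^{-s})^2$ gives
\[
\sum_{\substack{m \leq Y \\ (m, n_2) = 1}} d(m) = \Bigl(\frac{\varphi(n_2)}{n_2}\Bigr)^2 Y \Bigl(\log Y + 2\gamma - 1 + 2 \sum_{p \mid n_2} \frac{\log p}{p-1}\Bigr) + O(Y^{1/2 + o(1)}),
\]
and combined with $\varphi(n_2^2) = n_2 \varphi(n_2)$ this yields $M(n_2) = X \frac{\varphi(n_2)}{n_2}(\log(X n_2^2) + O(\log \log n_2)) + O(X^{1/2 + o(1)})$.

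Summing the main term. Using $\sum_{n \leq X} \varphi(n)/n = X/\zeta(2) + O(\log X)$ and, by partial summation, $\sum_{n \leq X} (\varphi(n)/n) \log n = (X \log X - X)/\zeta(2) + O((\log X)^2)$, together with the crude $\sum_{n \leq X} (\varphi(n)/n) \sum_{p \mid n} \frac{\log p}{p-1} \ll X \log \log X$, I obtain
\[
\sum_{n_2 \leq X} M(n_2) = \frac{X \log X \cdot X}{\zeta(2)} + 2 X \cdot \frac{X \log X}{\zeta(2)} + O(X^2 \log \log X) = \frac{3}{\zeta(2)} X^2 \log X + O(X^2 \log \log X),
\]
which matches the main term of the theorem with an error well inside the claimed bound.

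Bounding the error. I split $n_2$ into dyadic ranges $n_2 \sim N$. For the "small" range $N \leq X/(\log X)^A$ (some large constant $A$), I apply the Selberg--Hooley bound $|\Delta(X n_2^2; n_2^2, 1)| \ll (\log X)^{O(1)} (n_2 + X^{1/3} n_2^{2/3})$; summing over $n_2 \sim N$ and over dyadic $N$ up to the threshold gives a total contribution $\ll X^{2}/(\log X)^{5A/3 - O(1)}$, which is acceptable. For the "large" range $N \in (X/(\log X)^A, X]$ there are only $O(\log \log X)$ dyadic values, and here I invoke Theorem~\ref{Thm:divisorAPsquares}: the modulus $n_2^2$ has the required two-small-factor structure ($n_2 \cdot n_2$), and the square-moduli theorem furnishes an average bound of the shape
\[
\sum_{n_2 \sim N} |\Delta(X n_2^2; n_2^2, 1)| \ll \frac{X N (\log X)^{O(1)}}{(\log X)^{A}},
\]
so summing over the $O(\log \log X)$ dyadic $N$ and balancing with the small-range contribution yields the claimed error $O(X^2 \log X \sqrt{\log\log X / \log X})$.

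The main obstacle is the large-$N$ step: the moduli $n_2^2$ are a sparse subset of all $q$ of comparable size, so Theorem~\ref{thm:APexceptional} alone is inadequate, and one must work with the square-friendly version (Theorem~\ref{Thm:divisorAPsquares}). One must also verify that its hypotheses accommodate the very specific AP $m \equiv 1 \pmod{n_2^2}$ in a window of length $X n_2^2$ (rather than a generic $Q$-range) and track the dependence of the saving on the relative sizes of $n_2$, $X$ and the small-factor parameter, so that the $(\log X)^A$ saving per dyadic block survives the $O(\log \log X)$ dyadic pieces and combines with the pointwise bound to give exactly the error rate $\sqrt{\log \log X / \log X}$.
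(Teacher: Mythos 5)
Your reduction to $\Delta(Xn_2^2;n_2^2,1)$ and your evaluation of the main term $\sum_{n_2}M(n_2)=\frac{3}{\zeta(2)}X^2\log X+O(X^2\log\log X)$ are fine, and handling $n_2\le X/(\log X)^A$ with the pointwise Selberg--Hooley bound is a legitimate way to dispose of the small moduli. The gap is in the large range. Theorem \ref{Thm:divisorAPsquares} does not apply to the modulus $n_2^2$ merely because it factors as $n_2\cdot n_2$: the theorem requires a factorization $n_2^2=rsu^2$ with $r$ and $u$ small factors in the specific ranges \eqref{eq:powerranges} (measured against the length $Y=Xn_2^2$), and, crucially, it only bounds the discrepancy \emph{on average over the small factor $u$ with $r,s$ fixed} --- that average is where the Cauchy--Schwarz smoothing takes place and cannot be dropped. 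Concretely, one must restrict to $n_2=p_1p_2k$ with $p_1,p_2$ primes in prescribed intervals (the set $\mathcal{N}$ of Section \ref{sec:binarycubicproof}) and average over $p_1$. The integers $n_2$ lacking such prime factors form a set of density $\asymp\sqrt{\eps}\asymp(\log\log X/\log X)^{1/2}$ by the sieve bound \eqref{eq:sieve}, and for these one has nothing better than the trivial/Shiu bound $\sum_{n_1}d(n_1n_2^2+1)\ll X\log X$. This exceptional set is the true source of the error term $(\log\log X/\log X)^{1/2}$ in the theorem, not a balancing of dyadic ranges as you suggest.

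In particular, your displayed claim $\sum_{n_2\sim N}|\Delta(Xn_2^2;n_2^2,1)|\ll XN(\log X)^{O(1)-A}$ for $N$ near $X$ is not a consequence of Theorem \ref{Thm:divisorAPsquares} and is not provable by the methods of the paper: it would amount to equidistribution with a $(\log X)^{-A}$ saving for all but a $(\log X)^{-A}$ proportion of square moduli at the exponent-$2/3$ threshold, whereas the method inherently leaves an exceptional set of density $\sqrt{\eps}$. To repair the argument you need to (i) introduce the factorization set $\mathcal{N}$, (ii) bound its complement via Shiu's bound together with \eqref{eq:sieve}, and (iii) for $n_2\in\mathcal{N}$ group by the factorization and apply Theorem \ref{Thm:divisorAPsquares} with $(r,s,u)$ built from the factors of $n_2$ (e.g.\ $u=p_1$ and $r$ containing $p_2^2$), checking the ranges \eqref{eq:powerranges} relative to the length $Xn_2^2$. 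A smaller point: you should verify that the $Y^{o(1)}$ in the Selberg--Hooley bound can be taken of the form $d(q)^{O(1)}(\log Y)^{O(1)}$, so that it is actually beaten by the $(\log X)^{2A/3}$ saving at the top of your small range.
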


\subsection{Factorable estimates}
Our main technical results are factorable equidistribution estimates for the divisor function that are proved in Section \ref{sec:factorableproof}. For the statement we define a weighted version of the discrepancy
\begin{align*}
     \Delta^\psi(X;q,a) :=   \sum_{ \substack{n}} \psi(n/X) d(n) \Bigl(\1_{n \equiv a \pmod{q}} - \frac{\1_{(n,q)=1}}{\varphi(q)} \Bigr).
\end{align*}
\begin{theorem}\label{Thm:divisorAP}
    Let $r,s \geq 1$ be coprime with $s$ cube-free and let $U,X>1$. Let $\delta \in (0,1)$ and let $\psi:\R\to\C$ be a smooth function supported on $[1,2]$ and satisfying for all $j \geq 0$ the derivative bound $\|\psi^{(j)}\|_\infty \ll_j  \delta^{-j}$. Then uniformly for all $a \in \Z$ with $(a,rs)=1$ we have
    \begin{align*}
        & \frac{1}{U}\sum_{\substack{u \sim  U \\ u \, \text{square-free} \\ (u,ars)=1 \\} } | \Delta^\psi(X;rsu,a)|  \ll   \delta^{-2}d(rs)^{O(1)}(\log X)^{O(1)}  \mathcal{L}_{r,s}(U,X),  \\
       & \mathcal{L}_{r,s}(U,X) =     s^{1/8}U^{1/4}X^{1/4} + r^{1/4}    X^{1/4} + (rs)^{1/2}  +  r^{1/2} s^{3/8} U^{1/4} .
    \end{align*}
\end{theorem}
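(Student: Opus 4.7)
The plan is to combine the hyperbola method for $d(n)$ with Poisson summation modulo $rsu$, and then to exploit the factorization $q=rsu$ through Cauchy--Schwarz in $u$ together with the Chinese remainder theorem. First, by duality, pick unit weights $\eps_u \in \C$ with $|\eps_u|=1$ so that
\[
\sum_{u\sim U} |\Delta^\psi(X;rsu,a)| = \sum_{u\sim U} \eps_u\, \Delta^\psi(X;rsu,a).
\]
Expanding $d(n)=\sum_{mk=n}1$ and using the hyperbola method, by the $m\leftrightarrow k$ symmetry it suffices to bound, for each dyadic $M \leq (2X)^{1/2}$,
\[
\Sigma_M := \sum_{u\sim U}\eps_u\sum_{\substack{m\sim M \\ (m,arsu)=1}}\sum_k \psi\!\Bigl(\frac{mk}{X}\Bigr)\Bigl(\1_{k\equiv a\overline m\,(rsu)} - \frac{\1_{(k,rsu)=1}}{\varphi(rsu)}\Bigr),
\]
where $\overline m$ is the inverse of $m$ modulo $rsu$.

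Next, apply Poisson summation to the $k$-sum with modulus $rsu$: the frequency $h=0$ recovers the mean term and cancels, and the remaining frequencies $0<|h|\ll H$, $H\asymp X^\eps rsUM/X$, contribute smooth coefficients (involving $\widehat\psi$) times additive characters $e(h a\overline m/(rsu))$. The derivative bound on $\psi$ is used precisely here to justify the truncation and to handle the Fourier weights, which is what produces the $\delta^{-2}$ factor. Then apply Cauchy--Schwarz in the pair $(m,h)$ to isolate the $u$-sum,
\[
|\Sigma_M|^2 \ll MH\sum_{m,h}\Bigl|\sum_{u\sim U}\eps_u\,\widehat\psi\!\Bigl(\frac{hX}{mrsu}\Bigr)\, e\!\Bigl(\frac{h a\overline m}{rsu}\Bigr)\Bigr|^2.
\]
Opening the square produces a double sum over pairs $(u_1,u_2)$ in which $m$ and $h$ appear through exponentials whose combined modulus factors by CRT as $r\cdot s\cdot u_1u_2/(u_1,u_2)$. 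After completing the $m$- and $h$-sums (with another $\delta^{-1}$-type loss), each component becomes a Kloosterman-type sum bounded by the Weil estimate; the cube-freeness of $s$ is essential here to guarantee square-root cancellation for the $s$-component, since for cube-full factors of $s$ the associated complete sums are degenerate.

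Summing the contributions, the diagonal case $u_1=u_2$ yields the term $(rs)^{1/2}$; the off-diagonal case, after balancing the Weil savings against the size of the $(m,h)$-box and the range of $u_1\neq u_2$, produces the terms $s^{1/8}U^{1/4}X^{1/4}$ and $r^{1/2}s^{3/8}U^{1/4}$. The asymmetric exponents $1/8$ and $3/8$ in $s$ are expected to come from further decomposing $s$ into its square-free and squareful parts and treating each component separately, so as to extract the cleanest bound compatible with the $r$- and $u$-averagings. The parallel argument for $M > (X/2)^{1/2}$, applying Poisson in $m$ rather than $k$, produces the symmetric term $r^{1/4}X^{1/4}$. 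The principal obstacle will be this careful tracking of the $s$-component through CRT and the completion steps: a single invocation of the Weil bound is not enough to produce the precise powers $s^{1/8}$ and $s^{3/8}$, so the cube-freeness of $s$ needs to be used in a structural way — by an iterated completion-plus-Weil procedure that treats the square-free and square parts of $s$ on different footings — while keeping the $r$ and $u$ dependencies sharp.
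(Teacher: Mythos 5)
Your opening moves (hyperbola plus Poisson in $k$, which is essentially the Vorono\"\i\ step the paper uses, followed by a Cauchy--Schwarz to exploit the $u$-average) are on the right track, but the heart of the argument is missing, and the step where you invoke ``the Weil estimate'' fails in exactly the critical range. After Cauchy--Schwarz and opening the square over pairs $(u_1,u_2)$, the relevant incomplete sum has length about $N\asymp \delta^{-2}(rsU)^2/X\asymp X^{1/3}$ in the main range (equivalently, your $(m,h)$-box has $MH\ll rsU$ points), while the combined modulus is $[rsu_1,rsu_2]\asymp rsU^2\gg X^{2/3}$. A direct P\'olya--Vinogradov completion therefore loses: the completed bound $\asymp(rsU^2)^{1/2}$ exceeds the trivial bound $N$. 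This is precisely why the theorem needs \emph{two} small factors: before completing, the paper runs Heath-Brown's $q$-van der Corput process with shifts $n\mapsto n+kr$, $k\le N/r$, using the factor $r$ to remove $r$ from the modulus at the cost of a differencing diagonal. That step is where the terms $r^{1/2}s^{3/8}U^{1/4}$ (from the vdC diagonal $r^{1/2}$) and $s^{1/8}U^{1/4}X^{1/4}$ (from the off-diagonal $(su_1u_2)^{1/4}$) actually originate --- not from splitting $s$ into square-free and squareful parts as you suggest. Your proposal has no mechanism playing the role of $r$ at all, so the claimed exponents cannot be recovered.

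Second, after the van der Corput differencing one must bound complete sums of products of \emph{four} Kloosterman sums, $\sum_x e(xh/p^k)\prod_{i,j}\Kl(a(x+b_j);su_i)$, modulo primes and prime squares. The Weil--Estermann bound for a single Kloosterman sum says nothing about such correlations; the paper requires the Deligne--Katz input of Fouvry--Kowalski--Michel (Lemma \ref{le:completeKL}) and proves a new prime-square analogue (Lemma \ref{le:primesquares}). This is also the real reason $s$ is assumed cube-free: the correlation bounds are only available for prime and prime-square moduli, whereas the single-sum Weil bound is fine for all prime powers, so your explanation of the cube-freeness hypothesis is likewise off. Without the $r$-shift differencing and the fourth-order correlation estimates, no bookkeeping in the completion step will produce the stated bound.
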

Here, as usual the summation condition $u \sim U$ means $U<u\leq 2U$. We take the variables to satisfy the hierarchy $\log s \gg \log r \gg \log u.$ More precisely, for   $rsU = X^{2/3+o(1)}$ Theorem \ref{Thm:divisorAP} gives a power saving in the range
\begin{align} \label{eq:powerranges}
    s^3 U^6 < X^{2-\eps},   \quad r \leq X^{1/3-\eps},  \quad U  > X^{\eps}, \quad\text{and} \quad r^{12} s^9 U^6 < X^{8-\eps}.
\end{align}
Importantly, for this to be non-trivial the factors $r$ and $u$ can be made small. Generic moduli $q= X^{2/3+o(1)}$ have such factors and this allows us to obtain Theorem \ref{thm:APexceptional}.

We show a similar factorable estimate with an average over perfect squares of square-free integers from which Theorem \ref{thm:binarycubic} will follow by restricting to $n_2$ with a suitable factorization. The sparseness of squares makes the upper bound slightly worse in the third and fourth term but it still gives a power saving in the same range \eqref{eq:powerranges} for   $rsU = X^{2/3+o(1)}$.
\begin{theorem}\label{Thm:divisorAPsquares}
    Let $r,s \geq 1$ be coprime with $s$ cube-free  and let $U,X > 1$. Let $\psi:\R\to\C$ be a smooth function supported on $[1,2]$ and satisfying for all $j \geq 0$ the derivative bound $\|\psi^{(j)}\|_\infty \ll_j  \delta^{-j}$. Then uniformly for all $a \in \Z$ with $(a,rs)=1$ we have
    \begin{align*}
       & \frac{1}{U^{1/2}}\sum_{\substack{u^2 \sim  U \\ u \, \text{square-free} \\ (u,ars)=1 \\} } |  \Delta^\psi(X;rsu^2,a)|  
        \ll \delta^{-2}d(rs)^{O(1)} (\log X)^{O(1)}    \mathcal{M}_{r,s}(U,X), \\
     &   \mathcal{M}_{r,s}(U,X) =     s^{1/8}U^{1/4}X^{1/4} + r^{1/4}   X^{1/4} + (rs)^{1/2} U^{1/4} +  r^{1/2} s^{3/8} U^{1/4+o(1)}.
    \end{align*}
\end{theorem}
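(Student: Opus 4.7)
My plan is to mirror the proof of Theorem \ref{Thm:divisorAP} with the modulus $rsu$ replaced by $rsu^2$, tracking the three points at which the square structure of the new moduli and the sparser averaging set $u \sim U^{1/2}$ affect the final bound: the Cauchy--Schwarz loss in $u$, the shape of the completed exponential sums modulo $u^2$, and the strength of the off-diagonal cancellation (which depends on the modulus $rsu^2 \sim rsU$ and not on the number of $u$).

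I would open $d(n) = \sum_{ab=n} 1$ dyadically, reducing the estimation of $\Delta^\psi(X;rsu^2,a)$ to bilinear-type sums over $(a,b)$ with the congruence $ab \equiv a \pmod{rsu^2}$ and the standard main-term subtraction. The easy regime in which $\min(A,B)$ is small is handled directly. In the critical range $A \asymp B \asymp X^{1/2}$ one applies Cauchy--Schwarz in $u$; since there are only $\asymp U^{1/2}$ square-free $u$ with $u^2 \sim U$, this produces $U^{-1/4}$ after the final average rather than the $U^{-1/2}$ available in the linear case. This is exactly what turns the third term $(rs)^{1/2}$ of $\mathcal{L}_{r,s}$ into $(rs)^{1/2} U^{1/4}$ in $\mathcal{M}_{r,s}$.

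For the off-diagonal contribution, one opens the square and completes the inner sum by Poisson or Voronoi summation, producing Kloosterman-type sums modulo $rsu^2$. Since $u$ is square-free, the $u^2$-factor splits by CRT across primes $p \mid u$, and a direct stationary-phase (Hensel lifting) computation gives $|S(\alpha,\beta;p^2)| \leq 2p$ whenever $p \nmid \alpha\beta$, so that $|S(\alpha,\beta;u^2)| \leq d(u)\, u$. Square-root cancellation on the $u$-part is therefore automatic and does not require a Weil bound beyond the prime level, so the off-diagonal contribution has the same strength as in Theorem \ref{Thm:divisorAP}; this is why the first two terms of $\mathcal{M}_{r,s}$ match those of $\mathcal{L}_{r,s}$.

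The main obstacle, and the source of the $U^{o(1)}$ factor in the fourth term $r^{1/2}s^{3/8} U^{1/4+o(1)}$, is the bookkeeping of degenerate stationary points (where $p \mid \alpha\beta$ for some $p \mid u$) across primes $p \mid u$: summed against the other phases, these contribute divisor-function losses of size $d(u)^{O(1)} = U^{o(1)}$. Verifying that nothing worse appears, and in particular that the squaring in $u^2$ does not interact badly with the CRT decomposition against the coprime component $rs$, requires the bulk of the prime-by-prime local analysis of the completed sums. Once these local evaluations are in place the remainder of the argument proceeds exactly as in the proof of Theorem \ref{Thm:divisorAP}.
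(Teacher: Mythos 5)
Your proposal has the right high-level shape (Vorono\"\i{}/dyadic opening, Cauchy--Schwarz exploiting the average over $u$, separate diagonal and off-diagonal treatment), and your accounting for the third term $(rs)^{1/2}U^{1/4}$ via the sparseness of $\{u^2\sim U\}$ matches what actually happens. But the off-diagonal analysis has a genuine gap, in two related places. First, after Cauchy--Schwarz you face correlation sums $\sum_{n\le N}\Kl(an;rsu_1^2)\Kl(an;rsu_2^2)$ with $N\asymp(rsU)^{1/2}$ in the critical range, while the period of the summand is $rsu_1^2u_2^2/\gcd(u_1,u_2)^4\gg rsU$ for $u_1\neq u_2$; completing such a sum directly by Poisson gives at best roughly the square root of the period, which is $\ge N$, i.e.\ nothing. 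This is exactly the "barely too short for P\'olya--Vinogradov" obstruction described in the paper's sketch, and the cure is Heath-Brown's $q$-van der Corput step exploiting the \emph{second} small factor $r$: shift $n\mapsto n+kr$ (which fixes the $\Kl(\cdot\,;r)$ component by twisted multiplicativity \eqref{eq:twistmultipl}), apply a second Cauchy--Schwarz, and only then complete, modulo the much smaller quantity $su_1u_2$ (Lemmas \ref{lem:qvdc} and \ref{lem:PV}). Your proposal never uses the factor $r$ in the off-diagonal analysis, so the completion as you describe it fails.

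Second, and this is the essential new content of Theorem \ref{Thm:divisorAPsquares} relative to Theorem \ref{Thm:divisorAP}: after the van der Corput step the complete sums to be bounded are correlations of \emph{four} Kloosterman sums against an additive character, $\sum_{x\,(p^k)}e_{p^k}(xh)\prod_{i,j}\Kl(a_i(x+b_j);p^k)$, and one needs square-root cancellation $O(p^{k/2})$ up to gcd factors. Your claim that "square-root cancellation on the $u$-part is automatic" because $|S(\alpha,\beta;p^2)|\le 2p$ by Hensel lifting controls only \emph{individual} Kloosterman sums; fed into the correlation sums it yields the trivial bound $O(p^k)$ with no cancellation in $x$. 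For $k=1$ the needed correlation bound is the deep Fouvry--Kowalski--Michel/Katz input (Lemma \ref{le:completeKL}); for $k=2$ it is the new Lemma \ref{le:primesquares}, which is proved by inserting the explicit evaluation of $\Kl(a;p^2)$ as a sum over square roots and then counting solutions to a system of polynomial congruences in five variables --- that count is the source of the factors $\gcd(h,a_1^2-a_2^2,p)$ and $\gcd(h,b_1-b_2,p)$, which propagate through Lemmas \ref{lem:PV} and \ref{lem:qvdc} and Corollary \ref{cor:squares} (via the factorization $v_1^4-v_2^4$) to produce the $U^{o(1)}$ in the fourth term. Your proposal does not supply this correlation estimate, and without it the argument does not close.
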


\subsection{Remarks}
\begin{enumerate}
    \item The proofs of Theorems \ref{Thm:divisorAP} and \ref{Thm:divisorAPsquares} generalize to the divisor function replaced by Hecke eigenvalues of GL(2) holomorphic or Maass cuspidal newforms. Indeed, we only need two properties of the divisor function, namely, the Vorono\"\i\, summation formula and an $\ell^2$-norm bound.
    \item It seems likely that with more work one can establish Theorems \ref{Thm:divisorAP} and \ref{Thm:divisorAPsquares} with full uniformity in the residue class $a$, that is, with a maximum over $a$ as in the classical Bombieri-Vinogradov theorem. As a corollary, this would allow an extension  of Theorem \ref{thm:binarycubic} to more general binary cubic polynomials $C(X,Y)=X Y^2 +P(Y)$ with $P$ an arbitrary cubic polynomial. As it is, the proof can be generalized to $XY^2+a.$
    \item The constant $B$ in Theorem \ref{thm:APexceptional} is effective and depends linearly on the implied constants in Lemmas \ref{le:completeKL} and  \ref{le:primesquares}. All other implied constants are also effective.
\end{enumerate}

\subsection{Sketch of the proof}
Consider Theorem \ref{Thm:divisorAP} for
moduli $q=rsu \sim Q=X^{2/3}$. Then by Vorono\"\i\, summation we morally have
\begin{align*}
  \Delta(X;q,a) \approx \sum_{n \leq \sqrt{Q}} d(n) \Kl(an;q),
\end{align*}
where $\Kl(an;q)$ denotes the normalized Kloosterman sums \eqref{eq:kldef}. The point-wise Estermann-Weil bound \eqref{eq:Weil} barely fails for moduli of size $Q=X^{2/3}$ and so our task is to show additional cancellation. The average over a small factor $u \sim U$ allows us to apply Cauchy-Schwarz to smoothen the variable $n$. This reduces the problem to showing non-trivial cancellation in
\begin{align*}
    \sum_{u_1,u_2 \sim U} \bigg| \sum_{n \leq \sqrt{Q}}  \Kl(an;rsu_1) \Kl(an;rsu_2) \bigg|.
\end{align*}
Note that the size of the modulus has increased from $Q$ to $QU$ and so the sum over $n$ is barely too short for the Pólya-Vinogradov method. The second small factor $r$ allows us to use Heath-Brown's $q$-van der Corput method and on the off-diagonal $u_1 \neq u_2$ we morally get
\begin{align*}
    \sum_{n \leq \sqrt{Q}}  \Kl(an;rsu_1) \Kl(an;rsu_2) \ll Q^{1/4} (s^{1/4}U^{1/2} +r^{1/2}).
\end{align*}
This gives a non-trivial saving as long as $r$ is larger than $U$ and smaller than $\sqrt{Q}$. For the proof we need a Deligne type bound for the correlation of four Kloosterman sums (Lemma \ref{le:completeKL}) due to Fouvry, Kowalski, and Michel \cite{FKMproducts}. For the proof of Theorem \ref{Thm:divisorAPsquares} we prove an analogous bound with square moduli (Lemma \ref{le:primesquares}).
\subsection{Acknowledgements}
The authors are grateful to James Maynard for helpful discussions. This project has
received funding from the European Research Council (ERC) under the European Union's Horizon research and innovation programme (grant agreement No. 851318 both authors, No. 101162746 first author). The second author was supported by a grant from the Magnus Ehrnrooth foundation.

\section{Lemmas on exponential sums}
\subsection{Complete sums modulo primes}
We begin by importing a result of Fouvry, Kowalski, and  Michel \cite{FKMproducts} on correlations of trace functions, which builds on bounds of Deligne and Katz coming from algebraic geometry. Let $p$ be a prime. The group $\PGL_2(\F_p)$ acts on $\F_p$ (or more precisely on $\P_{\F_p}$) via 
\begin{align*}
    \gamma = \mqty(a&b \\ c  & d): x \mapsto \gamma.x := \frac{ax+b}{cx+d}. 
\end{align*}
For $q \geq 1$ and $a \in \Z$ we denote the usual (normalised) Kloosterman sum by
\begin{align} \label{eq:kldef}
    \Kl(a;q) := \frac{1}{\sqrt{q}}\sum_{x  \, (q)^\ast} e_q(ax+ \overline{x}).
\end{align}
Recall that it is real valued and that for any $\gcd(r,s)=1$ we have the twist multiplicative relation 
\begin{align}\label{eq:twistmultipl}
     \Kl(a;rs) =   \Kl(\overline{r}^2 a;s)  \Kl(\overline{s}^2 a;r).
\end{align}
We also recall the Estermann-Weil bound \cite{Estermann} for all integers $c \geq 1$
\begin{align} \label{eq:Weil}
    | \Kl(a;c)| \leq d(c) \gcd(a,c)^{1/2}.
\end{align}

The following lemma is \cite[Corollary 3.3]{FKMproducts}. We only need the special cases $k=2$ and $k=4$ and $\gamma_i.x$ being affine functions $ax+b$. 
\begin{lemma} \label{le:completeKL}
 Let $k \geq 1$ be an integer.   Let $p$ be a prime, $h \in \F_p$, and let $\gamma_1,\dots,\gamma_k \in \PGL_2(\F_p)$. Suppose that either $h \neq 0$ or that there exists some $\gamma \in \PGL_2(\F_p)$ with
    \begin{align*}
        \# \{i \leq k: \gamma_i=\gamma\} \equiv 1 \pmod{2}.
    \end{align*}
    Then
    \begin{align*}
      \sum_{x \, (p)} e_p(xh)   \prod_{i=1}^k \Kl(\gamma_i.x;p) \ll_k p^{1/2},
    \end{align*}
    where the sum runs over $x \in \F_p$ such that $\gamma_i.x$ is defined for all $i \leq k$.
\end{lemma}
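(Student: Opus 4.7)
The plan is to apply Deligne's Weil II bounds in the framework Katz developed for trace functions of $\ell$-adic sheaves. The Kloosterman sum $\Kl(\cdot;p)$ is the Frobenius trace function of the Kloosterman sheaf $\mathcal{K}\ell_2$ on $\mathbb{G}_m/\F_p$; it is geometrically irreducible of rank two, pointwise pure of weight $0$, and by a theorem of Katz has geometric monodromy group $\SL_2$. The pullback $\gamma_i^* \mathcal{K}\ell_2$ along $\gamma_i \in \PGL_2(\F_p)$ has trace function $x \mapsto \Kl(\gamma_i.x;p)$. Assembling these factors with the Artin--Schreier sheaf $\mathcal{L}_{\psi(hx)}$ associated to $x \mapsto e_p(hx)$ yields a sheaf
$$\mathcal{F} := \mathcal{L}_{\psi(hx)} \otimes \bigotimes_{i=1}^k \gamma_i^* \mathcal{K}\ell_2$$
of rank $2^k$ and weight $0$ (away from the finitely many poles of the $\gamma_i$) whose trace function is precisely the summand in the lemma.

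By the Grothendieck--Lefschetz trace formula combined with Deligne's Riemann Hypothesis over finite fields, one has
$$\sum_{x \in \F_p} \operatorname{tr}(\operatorname{Frob}_x \mid \mathcal{F}) \ll_k p^{1/2} \cdot \dim H^1_c(\mathbb{A}^1_{\overline{\F_p}}, j_! \mathcal{F}),$$
with no contribution from $H^2$ provided the geometrically invariant subsheaf of $\mathcal{F}$ vanishes. Since each $\gamma_i^*\mathcal{K}\ell_2$ is lisse of bounded rank and ramified at only a bounded set of points, the Euler--Poincaré formula bounds $\dim H^1_c \ll_k 1$. Thus the lemma reduces to verifying that $\mathcal{F}$ has no geometrically trivial constituent, which is the content of the hypothesis.

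I would verify triviality as follows. Since $\mathcal{K}\ell_2$ has geometric monodromy $\SL_2$, and (by Katz's rigidity for Kloosterman sheaves) for $\gamma\neq\gamma'$ in $\PGL_2(\F_p)$ the pullbacks $\gamma^*\mathcal{K}\ell_2$ and $\gamma'^*\mathcal{K}\ell_2$ are geometrically non-isomorphic, a Goursat-type argument on the monodromy of the product $\bigotimes_i \gamma_i^* \mathcal{K}\ell_2$ shows that this untwisted tensor product contains the trivial representation only when every class $\gamma$ appearing among the $\gamma_i$ appears an \emph{even} number of times. In that exceptional configuration, tensoring with $\mathcal{L}_{\psi(hx)}$ still gives a nontrivial sheaf whenever $h\neq 0$, because $\mathcal{L}_{\psi(hx)}$ is wildly ramified at $\infty$ and has no invariants. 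The two hypotheses of the lemma therefore exactly cover the two ways of forcing $\mathcal{F}$ to have no trivial constituent.

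The hard part will be the monodromy and rigidity input, namely Katz's identification of the geometric monodromy of $\mathcal{K}\ell_2$ with $\SL_2$ together with the classification of geometric isomorphisms between its $\PGL_2$-pullbacks; granted these, the cohomological setup and Deligne's bound give the result by essentially formal manipulation, which is why Fouvry, Kowalski, and Michel can state this cleanly as \cite[Corollary 3.3]{FKMproducts} and why we import it rather than reprove it here.
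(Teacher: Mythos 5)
The paper does not prove this lemma at all: it is imported verbatim as \cite[Corollary 3.3]{FKMproducts}, so there is no in-paper argument to compare against. Your sketch is a faithful reconstruction of the Fouvry--Kowalski--Michel proof — the sheaf $\mathcal{L}_{\psi(hx)}\otimes\bigotimes_i\gamma_i^*\mathcal{K}\ell_2$, vanishing of geometric (co)invariants, Euler--Poincar\'e control of $\dim H^1_c$, and Deligne's weight bounds — and you correctly locate where the two hypotheses enter: an odd multiplicity kills the trivial constituent of the tensor product (since any rank-one constituent under the product $\SL_2$-monodromy is geometrically trivial), while $h\neq 0$ handles the even-multiplicity case because a nontrivial Artin--Schreier twist of a geometrically trivial sheaf has no $H^2_c$. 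The one caveat is that the two inputs you flag as ``the hard part'' — Katz's computation of the geometric monodromy of $\mathcal{K}\ell_2$ and, especially, the pairwise geometric non-isomorphism of distinct $\PGL_2$-pullbacks needed for the Goursat--Kolchin--Ribet step — are substantial theorems in their own right, so your argument is a correct outline rather than a self-contained proof; since the paper itself black-boxes the entire statement, this is the appropriate level of detail.
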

\subsection{Complete sums modulo prime squares}
For Theorems \ref{thm:binarycubic} and \ref{Thm:divisorAPsquares}  we require a similar result for prime square moduli. We only state and prove it in the special case of affine functions necessary for us, but it seems likely that a full analogue of Lemma \ref{le:completeKL} also holds.

\begin{lemma} \label{le:primesquares}
Let $a_1, a_2, b_1, b_2,h \in \Z$ and $p$ a prime with $\gcd(a_1a_2,p)=1$. Then we have
\begin{align} \label{eq:twoKLp2}
     \sum_{x\, (p^2)} e_{p^2}(xh)    \Kl(a_1(x+b_1);p^2) \Kl(a_1(x+b_2);p^2) \ll p \gcd(h,b_1-b_2,p)
\end{align}
and
\begin{align}\begin{split}
     \sum_{x\, (p^2)} e_{p^2}(xh)   \prod_{i,j=1}^2  \Kl(a_i(x+b_j);p^2)  
  \ll p \bigl(   \gcd (h,a_1^2-a_2^2,p) + \gcd(h,b_1-b_2,p)\bigr).\label{eq:fourKLp2}
\end{split}
\end{align}
\end{lemma}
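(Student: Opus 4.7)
My plan is to open each Kloosterman sum via the Fourier expansion
\begin{align*}
    \Kl(c; p^2) = \frac{1}{p} \sum_{\alpha \in (\Z/p^2 \Z)^\ast} e_{p^2}(c \alpha + \overline{\alpha}),
\end{align*}
execute the $x$-sum by orthogonality to obtain a single linear constraint on the Fourier variables, and bound the resulting exponential sum with rational function phase by $p$-adic stationary phase. The core tool is that for a sum $\sum_{\alpha \in (\Z/p^2\Z)^\ast} e_{p^2}(f(\alpha))$ with $f$ a rational function regular mod $p$, writing $\alpha = \alpha_0 + p\alpha_1$ and summing over $\alpha_1 \in \Z/p\Z$ collapses the sum to $p$ times the number of $\alpha_0 \in (\Z/p\Z)^\ast$ satisfying $f'(\alpha_0) \equiv 0 \pmod{p}$, giving an $O(p)$ bound whenever $f'$ does not vanish identically modulo $p$.

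For \eqref{eq:twoKLp2}, the reduction yields, after eliminating the second Fourier variable via the $x$-constraint,
\begin{align*}
    \sum_{x\,(p^2)} e_{p^2}(xh) \Kl(a_1(x+b_1); p^2) \Kl(a_1(x+b_2); p^2) = e_{p^2}(-h b_2) \sum_{\substack{\alpha\,(p^2)^\ast \\ \alpha \not\equiv \mu\,(p)}} e_{p^2}\Bigl(\nu \alpha + \overline{\alpha} + \overline{\mu - \alpha}\Bigr),
\end{align*}
with $\mu = -h \overline{a_1}$ and $\nu = a_1(b_1 - b_2)$. Clearing denominators, the stationary-phase equation takes the form $\nu \alpha^2 (\mu - \alpha)^2 \equiv \mu(\mu - 2\alpha) \pmod{p}$, a polynomial of degree at most $4$ in $\alpha$. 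This polynomial vanishes identically modulo $p$ exactly when $p \mid \nu$ and $p \mid \mu$, equivalently when $p \mid b_1 - b_2$ and $p \mid h$, which is precisely the case where the right-hand side of \eqref{eq:twoKLp2} is $\gg p^2$ and the trivial bound suffices. Otherwise the polynomial has at most $4$ roots, giving the stated $O(p)$ bound.

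For \eqref{eq:fourKLp2}, I would apply the same Fourier expansion to all four factors and use the $x$-constraint to eliminate $\alpha_{22}$, leaving a sum over the three variables $\alpha_{11}, \alpha_{12}, \alpha_{21}$. A useful cancellation occurs: the linear-in-$\alpha$ terms in the phase simplify so that $\alpha_{12}$ enters only through $\overline{\alpha_{12}} + \overline{K - \lambda \alpha_{12}}$, where $\lambda = \overline{a_2} a_1$ and $K$ is linear in $(\alpha_{11}, \alpha_{21}, h)$. Stationary phase in $\alpha_{12}$ reduces the inner sum to the roots of the quadratic $\lambda(\lambda - 1)\alpha_{12}^2 - 2K\lambda \alpha_{12} + K^2 \equiv 0 \pmod{p}$, contributing a factor $p$; applying stationary phase successively in $\alpha_{21}$ and then $\alpha_{11}$ yields two further factors of $p$, producing $|S_4| \ll p$ in generic cases. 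The stationary-phase polynomials degenerate in exactly two configurations: (i) $p \mid h$ together with $\lambda \equiv 1 \pmod{p}$ (equivalent to $p \mid a_1 - a_2$) and $K \equiv 0 \pmod{p}$, and (ii) $p \mid h$ with $p \mid b_1 - b_2$; these correspond to the two gcd terms on the right-hand side and are handled by the trivial bound. The case $p \mid a_1 + a_2$ is similarly analyzed via the alternative Hensel lift of $\sqrt{a_2/a_1}$.

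The main obstacle is the detailed case analysis for \eqref{eq:fourKLp2}: one must verify that three successive stationary-phase reductions can be carried out with the polynomial degrees staying uniformly bounded at each step, and that the full list of degenerate configurations matches exactly the two gcd terms in the claimed bound. An auxiliary technicality is handling the boundary points where $p \mid a_i(x + b_j)$ and the Fourier formula for $\Kl(\cdot; p^2)$ requires modification; these contribute $O(p^{3/2})$ via the Estermann-Weil bound \eqref{eq:Weil} and are absorbed into the stated estimates.
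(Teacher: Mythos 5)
Your method is at bottom the same as the paper's: a $p$-adic stationary-phase (critical-point) reduction of the complete sum modulo $p^2$ to a point count over $\F_p$, followed by elimination. The only structural difference is the order of operations. The paper first evaluates each $\Kl(a;p^2)$ by stationary phase, obtaining $\Kl(a;p^2)=\sum_{\overline y^2\equiv a}e_{p^2}(ay+\overline y)$, and then performs the $x$-sum; you complete the $x$-sum first and then apply stationary phase in the dual variables. These converge: after eliminating $\alpha_{22}$, your critical-point equations (e.g. $\overline{\alpha_{11}}^{\,2}-\overline{\alpha_{12}}^{\,2}\equiv a_1(b_1-b_2)$, $\overline{\alpha_{12}}^{\,2}\equiv\lambda\,\overline{\alpha_{22}}^{\,2}$) are exactly the paper's system $\overline{y_{i+j}}^{\,2}\equiv a_i(y+b_j)$ with $y$ eliminated. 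Your treatment of \eqref{eq:twoKLp2} is complete and correct, including the identification of the degenerate locus $p\mid(h,b_1-b_2)$ where the trivial bound suffices.

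For \eqref{eq:fourKLp2}, however, the decisive step is asserted rather than carried out: the entire content of the lemma is the verification that the critical-point system has $O(1)$ solutions outside the two degenerate parameter configurations, and this requires the full elimination with all its branches ($\lambda=a_1\overline{a_2}$ a nonresidue, in which case the sum vanishes; the two Hensel lifts of $\sqrt{a_1/a_2}$, corresponding to $p\mid a_1-a_2$ versus $p\mid a_1+a_2$; $p\mid b_1-b_2$ or not; $p\mid h$ or not). That case analysis is precisely the second half of the paper's proof, so you have correctly reduced the lemma to its hard part without doing it. Two further points. First, your degenerate configuration (i) is stated as ``$p\mid h$, $\lambda\equiv 1$, and $K\equiv 0$''; but $K$ depends on the summation variables $\alpha_{11},\alpha_{21}$, so the condition must be restated purely in terms of $(h,a_1,a_2)$ (when $p\mid h$ and $\lambda\equiv 1$, the locus $K\equiv 0$ is a line of $(\alpha_{11},\alpha_{21})$ on which the $\alpha_{12}$-equation degenerates, which is what produces the extra factor of $p$). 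Second, the closing claim that boundary terms with $p\mid a_i(x+b_j)$ ``contribute $O(p^{3/2})$ \dots and are absorbed into the stated estimates'' is wrong as written: an error of size $p^{3/2}$ could not be absorbed into the generic bound $O(p)$. It is also moot, since in your setup the $x$-sum is executed exactly before any stationary phase is performed, and in any case $\Kl(m;p^2)=0$ whenever $p\mid m$, so no such boundary terms exist.
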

\begin{proof}
The proof of \eqref{eq:twoKLp2} is similar but easier so we only consider \eqref{eq:fourKLp2}. By writing $x=y+zp$ for $y,z \in \Z/p\Z$ and using $\overline{y+pz} \equiv \overline{y}-pz \overline{y}^2 \pmod{p^2}$, we have
\begin{align*}
    \Kl(a;p^2) = \frac{1}{p} \sum_{y \, (p)^\ast} \sum_{z \,(p)} e_{p^2}(ay+\overline{y}) e_p(z(a-\overline{y}^2)) = \sum_{\substack{y \, (p)^\ast  \\ \overline{y}^2 \equiv a \pmod{p}}}   e_{p^2}(ay+\overline{y}).
\end{align*}
Plugging this in four times and writing again  $x=y+zp$, we get
\begin{align*}
    & \bigg|  \sum_{x\, (p^2)} e_{p^2}(xh)    \prod_{i,j=1}^2  \Kl(a_i(x+b_j);p^2)  \bigg|  \leq \sum_{\substack{y,y_1,y_2,y_3,y_4 \pmod{p} \\  \overline{y_i}^2 \equiv a_i y +b_i \pmod{p} }}\bigg| \sum_{z \pmod{p}} e_p(zh) \prod_{i,j=1}^2 e_{p}( a_i z y_{i+j} ) \bigg| \\
&\leq p \, \# \Bigl\{ (y,y_1,y_2,y_3,y_4) \in (\Z/p\Z)^5:  \overline{y_{i+j}}^2 \equiv a_i (y +b_j) \pmod{p}  \forall i,j \in \{1, 2\}  \\
& \hspace{175pt}  h+a_1 y_1+a_2 y_2+a_1 y_3+a_2 y_4\equiv 0  \pmod{p} \Bigr \}.
\end{align*}
It then suffices to show that the number of solutions $(y,y_1,y_2,y_3,y_4)$ is bounded by
\begin{align*}
 \ll   \gcd (h,a_1^2-a_2^2,p) + \gcd(h,b_1-b_2,p).
\end{align*}
Solving for $y$, we consider a system of four equations
\begin{align*}
 a_1 y_1^2 = a_2 y_2^2, \quad \quad
  a_1 y_3^2&= a_2 y_4^2,\\
    a_1(y_3^2-y_1^2) + (b_2-b_1)y_1^2 y_3^2 &= 0,\\
       h+a_1 y_1+ a_2 y_2 + a_1 y_3 + a_2 y_4 &= 0.
\end{align*}
Fixing roots $\omega_1,\omega_2 \in \Z/p\Z$ to the equation $\omega^2 = a_1/a_2$ we get  $ y_2 =  \omega_1 y_1 $ and $
 y_4 =  \omega_2 y_3 $. Substituting for $y_2$ and $y_4$, it suffices to bound the number of solutions to
\begin{align*}
  a_1(y_3^2-y_1^2) + (b_2-b_1)y_1^2 y_3^2 &= 0,\\
 h+(a_1+ \omega_1 a_2) y_1 + (a_1 + \omega_2 a_2) y_3 &= 0. 
\end{align*}
If $(a_1 + \omega_1 a_2)= (a_1 + \omega_2 a_2)=0$, then there are no solutions unless $h=0$ and  in this case the number of solutions is $O(\gcd(h,a_1^2-a_2^2,p))$. Suppose then by symmetry that $(a_1 + \omega_1 a_2) \neq 0$. We can solve $y_1=\frac{-h- (a_1 + \omega_2 a_2) y_3}{(a_1 +\omega_1 a_2)}$ and plug it in to get
\begin{align*}
    a_1\left(y_3^2-\Bigl(\frac{h+ (a_1  +\omega_2 a_2) y_3}{(a_1+ \omega_1 a_2)}\Bigl)^2\right) + (b_2-b_1)\Bigl(\frac{h+ (a_1 + \omega_2 a_2) y_3}{(a_1 + \omega_1 a_2)}\Bigr)^2 y_3^2 &= 0.
\end{align*}
If $(a_1 + \omega_2 a_2) \neq 0$ and $b_1\neq b_2$, then the congruence for $y_3$ is non-trivial by considering the leading term  $ y_3^4 (b_1-b_2)\frac{(a_1 + \omega_2 a_2)^2 }{(a_1+ \omega_1 a_2)^2}$ and we get $O(1)$ solutions. If $b_1=b_2$, then we get
\begin{align*}
      a_1\left(y_3^2-\Bigl(\frac{h+ (a_1  +\omega_2 a_2) y_3}{(a_1+ \omega_1 a_2)}\Bigl)^2\right) = 0
\end{align*}
which is non-trivial as long as $h \neq 0$. Thus, in this case we get at most $\ll \gcd(h,b_1-b_2,p)$ solutions. Finally, if $(a_1 + \omega_2 a_2) = 0$, then we get
\begin{align*}
    a_1\left(y_3^2+\Bigl(\frac{h}{(a_1+\omega_1 a_2)}\Bigl)^2\right)+(b_2-b_1)\Bigl(\frac{h}{(a_1+ \omega_1 a_2)}\Bigl)^2 y_3^2 = 0
\end{align*}
If $h \neq 0$, then either there are no solutions (if the leading coefficients cancel) or there are $O(1)$ solutions because the polynomial is non-trivial. For $h=0$ the equation is $a_1 y_3^2 = 0$ and there is only one solution.
\end{proof}

\subsection{Incomplete sums: Pólya-Vinogradov bound}
In this section we use the Pólya-Vinogradov method of completing the sum to get the following bound (cf. \cite[Sections 3 and 6]{Polymath} for similar arguments).
\begin{lemma}\label{lem:PV}
Let $a,s,u_1,u_2 \geq 1$ be pair-wise coprime integers with $s,u_1,u_2$ be cube-free. Let $N > 1$ and let $\psi_N:\R \to [0,1]$ be a smooth function supported on $[-N,N]$ satisfying $\psi_N^{(j)} \ll_j N^{-j}$.
 We have
    \begin{align*}
     &\sum_{n}  \psi_N(n) \prod_{i,j=1}^2 \Kl(a(n+b_j) ;su_i) \ll  d (su_1u_2)^{O(1)}   (su_1u_2)^{1/2} \mathcal{P}_{s,u_1,u_2}(N),   \\
       &\mathcal{P}_{s,u_1,u_2}(N) = 1 + \frac{N}{s u_1 u_2} \prod_{p^k|| u_1 u_2}\gcd(b_1-b_2,p)^{k/2}   \prod_{p^k|| s} \big( \gcd(u_1^2-u_2^2,p)^{k/2 } + \gcd(b_1-b_2,p)^{k/2} \big).    \end{align*}
\end{lemma}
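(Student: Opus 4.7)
The plan is to apply the Pólya-Vinogradov method: use twist multiplicativity \eqref{eq:twistmultipl} to express the product of Kloosterman sums as a single function $F(n)$ modulo $M := s u_1 u_2$, complete the sum via Poisson summation, and bound the resulting complete exponential sums by factoring them via CRT over the coprime pieces $s, u_1, u_2$ and further over their prime and prime-square divisors.

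First I would apply \eqref{eq:twistmultipl} to write
\begin{align*}
\Kl(a(n+b_j); s u_i) = \Kl(\overline{u_i}^2 a(n+b_j); s)\, \Kl(\overline{s}^2 a(n+b_j); u_i),
\end{align*}
so that the product over $i, j \in \{1,2\}$ factors as $F(n) = F_s(n) F_{u_1}(n) F_{u_2}(n)$ with each factor depending only on $n$ modulo the corresponding coprime part. Poisson summation then yields
\begin{align*}
\sum_n \psi_N(n) F(n) = \frac{1}{M} \sum_{h \in \Z} \widehat{\psi_N}(h/M)\, K(h), \quad K(h) := \sum_{x \, (M)} F(x)\, e_M(h x),
\end{align*}
and the derivative bound on $\psi_N$ gives $\widehat{\psi_N}(h/M) \ll_A N(1+|h|N/M)^{-A}$, so the effective range is $|h| \leq H := (M/N)(\log M)^{O(1)}$.

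By CRT I would then factor $K(h)$ according to $M = \prod_p p^{m_p}$ with $m_p \in \{1,2\}$. For each prime with $p || s$ I apply Lemma \ref{le:completeKL} with $k=4$, and for each prime with $p || u_i$ with $k=2$; this gives the square-root bound $O(p^{1/2})$ unless the affine functions pair up evenly, which happens exactly when $u_1^2 \equiv u_2^2 \pmod p$ (in the $s$-case) or $b_1 \equiv b_2 \pmod p$, and additionally $h \equiv 0 \pmod p$. For primes with $p^2 || s$ or $p^2 || u_i$ I apply Lemma \ref{le:primesquares} (equations \eqref{eq:fourKLp2} and \eqref{eq:twoKLp2}), obtaining $p = (p^2)^{1/2}$ times an analogous gcd factor. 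Multiplying the local bounds gives
\begin{align*}
|K(h)| \ll d(M)^{O(1)} M^{1/2} \prod_{\substack{p \text{ bad} \\ p \mid h}} p^{m_p/2},
\end{align*}
where \emph{bad} denotes a prime at which the above degeneracy condition holds.

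Finally I would separate the $h=0$ and $h \neq 0$ contributions. For $h = 0$, every bad prime contributes its full $p^{m_p/2}$, so $\frac{N}{M}|K(0)| \ll d(M)^{O(1)} (N/M^{1/2}) \Pi$, where $\Pi$ is the double product appearing in $\mathcal{P}_{s,u_1,u_2}(N)$; this reproduces the second term of the claimed bound. For $h \neq 0$, the multiplicative gcd factor collapses on average to a factor $\prod_{\text{bad } p}(1 + p^{m_p/2-1}) \ll d(M)^{O(1)}$ after summation, so that $\frac{N}{M}\sum_{0 < |h| \leq H} |K(h)| \ll d(M)^{O(1)} M^{1/2}$, giving the leading term $M^{1/2}$. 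The main technical obstacle is the careful local analysis at $p^2 || s$: the gcd $\gcd(h, (\overline{u_1}^2 a)^2 - (\overline{u_2}^2 a)^2, p)$ that \eqref{eq:fourKLp2} naturally produces must be shown to be captured by the simpler $\gcd(u_1^2 - u_2^2, p)$ appearing in $\mathcal{P}$ (noting that the extra case $u_1^2 \equiv -u_2^2 \pmod p$ can be absorbed into a constant-factor loss), and the multiplicative averaging of the gcd factors over $|h| \leq H$ must be controlled without leakage beyond the stated $d(rs)^{O(1)}$ divisor factors.
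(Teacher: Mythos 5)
Your proposal follows essentially the same route as the paper: twist multiplicativity \eqref{eq:twistmultipl} to factor the product, Poisson summation to complete the sum, a CRT factorization of the complete sum into local factors at the primes dividing $s$, $u_1$, $u_2$, the local bounds from Lemma \ref{le:completeKL} (for $p\,\|$) and Lemma \ref{le:primesquares} (for $p^2\,\|$), and finally the summation over the dual variable $h$, splitting off $h=0$. One caveat: your parenthetical claim that the extra degeneracy $u_1^2\equiv -u_2^2\pmod p$ arising from $\gcd(h,a_1^2-a_2^2,p)$ in \eqref{eq:fourKLp2} ``can be absorbed into a constant-factor loss'' is not right as stated --- for such a prime it costs a factor $p$, not $O(1)$. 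The correct resolutions are either to observe that the proof of Lemma \ref{le:primesquares} only produces the large count when $a_1\equiv a_2\pmod p$ (which after clearing the unit factors is exactly $u_1^2\equiv u_2^2\pmod p$), or to carry $\gcd(u_1^4-u_2^4,p)$ through the statement, which is harmless in the downstream applications since $u_1^4-u_2^4$ factors just as well as $u_1^2-u_2^2$. Also, your assertion that the $h\neq 0$ range contributes only the term $M^{1/2}$ is slightly imprecise (the divisor-sum bound for $\sum_{0<|h|\le H}\gcd$-factors also produces the second term of $\mathcal{P}$), but this is absorbed by the $h=0$ contribution you already account for.
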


\begin{proof}
We complete the sum over $n$ with Poisson summation
\begin{align*}
      \sum_{n}  \psi_N(n) \prod_{i,j=1}^2 \Kl(a(n+b_j) ;su_i)
        = \frac{1}{s u_1 u_2}\sum_{h} \widehat{\psi_N}\Bigl(\frac{h}{s u_1 u_2}\Bigr)  S(h;s,u_1,u_2),
\end{align*}
where
\begin{align*}
    S(h;s,u_1,u_2) = \sum_{x \, (s u_1 u_2 )} e_{s  u_1,u_2 }(hx) \prod_{i,j=1}^2 \Kl(a(x+b_j) ;su_i).
\end{align*}
By \eqref{eq:twistmultipl} and the Chinese remainder theorem we can write
\begin{align*}
    S(h;s,u_1,u_2) = \prod_{p^k|| s}  T(h;p^k) \prod_{p^k|| u_1}  U(h,u_1;p^k) \prod_{p^k|| u_2} U(h,u_2;p^k),
\end{align*}
where for $q=p^k||s$ and $c=su_1u_2$
\begin{align*}
    T(h;q) :=   \sum_{x \, (q)} e_{q}(hx \overline{(c/q)} ) \prod_{i,j=1}^2  \Kl(\overline{u_i}^2 \overline{(s/q)}^2a (x +b_j) ;q) 
\end{align*}
and for $q=p^k||u_i$
\begin{align*}
    U(h,u_i;q) :=& \sum_{x \, (q)} e_{q}(hx \overline{(c/q)} )  \Kl( (\overline{s u_i/q })^2a (x +b_1) ;q)  \Kl((\overline{su_i/q})^2a( x+b_2);q).
\end{align*}
 For $p^k||s$ we get from Lemma \ref{le:completeKL} if $k=1$ or \eqref{eq:fourKLp2} of Lemma \ref{le:primesquares} if $k=2$
\begin{align*}
     T(h;p) \ll \gcd(h,u_1^2-u_2^2,p)^{k/2} p^{k/2} + \gcd(h,b_1-b_2,p)^{k/2} p^{k/2}.
\end{align*}
For $p||u_i$ we get from Lemma \ref{le:completeKL} if $k=1$ or \eqref{eq:twoKLp2} of Lemma \ref{le:primesquares} if $k=2$
\begin{align*}
     U(h,u_i;p^k) \ll  \gcd(h,b_1-b_2,p)^{k/2} p^{k/2}.
\end{align*}
Summing over $h$ gives the result.
\end{proof}

\subsection{Incomplete sums: $q$-van der Corput bound}
We require the following estimate that follows by an application of Heath-Brown's $q$-analogue of the van der Corput $A$-process \cite{HBLfunctions,Polymath} and Lemma \ref{lem:PV}.
\begin{lemma}\label{lem:qvdc}
 Let $a,r,s,u_1,u_2 \geq 1$ be pair-wise coprime integers with $s,u_1,u_2$ cube-free and let $c|r$. Let $N > 1$ and let $\psi_N:\R \to [0,1]$ be a smooth function supported on $[1/2,N]$ satisfying $\psi_N^{(j)} \ll_j N^{-j}$. Then
   \begin{align*}
       \sum_{(n,r)=c} \psi_N(n) &\Kl(an;rsu_1) \Kl(an;rsu_2)  \\
       &\ll  d(r s u_1 u_2)^{O(1)}  (c N)^{1/2}\Bigl(  (su_1u_2)^{1/4} \Bigl(1 + \frac{N\prod_{p^k|| s} \gcd(u_1^2-u_2^2,p)^{k/2 } }{su_1u_2}\Bigr)^{1/2}  + r^{1/2}  \Bigr).
   \end{align*}
\end{lemma}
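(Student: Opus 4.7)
The plan is to apply Heath-Brown's $q$-analogue of the van der Corput $A$-process with $q = r$, exploiting the small factor $r$, and then reduce the resulting off-diagonal shift sums to the setting of Lemma~\ref{lem:PV}. First I would use twist multiplicativity \eqref{eq:twistmultipl} to factor $\Kl(an; rsu_i) = L_i(n) K_i(n)$, where $L_i(n) := \Kl(\overline{su_i}^2 an; r)$ is $r$-periodic and $K_i(n) := \Kl(\overline{r}^2 an; su_i)$ has conductor coprime to $r$. In the easy range $r > N$ the Weil bound \eqref{eq:Weil} applied termwise already yields the target term $(cN)^{1/2} r^{1/2}$, so I may assume $r \leq N$ and set $L := \lceil N/r \rceil$. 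Since $L_1 L_2$ and the condition $(n,r) = c$ are both $r$-periodic, the substitution $n \mapsto n - \ell r$ preserves them; averaging over $0 \leq \ell < L$ gives
\[
S = \frac{1}{L}\sum_{\ell=0}^{L-1}\sum_{(n,r)=c} L_1(n) L_2(n)\, \psi_N(n - \ell r) K_1(n - \ell r) K_2(n - \ell r).
\]

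Next I apply Cauchy--Schwarz in $n$, placing the $(n,r)=c$ restriction on the $L_1 L_2$-factor. The Weil bound yields $|L_i(n)| \ll d(r)\, c^{1/2}$ on $(n,r)=c$, and there are $O(N/c)$ admissible $n$ in the effective range of size $O(N)$, so the first Cauchy--Schwarz factor is $O(d(r)^{O(1)} cN)$. Expanding the square of the $\ell$-sum (now summed over all $n$) and substituting $m = n - \ell_1 r$, $h = \ell_2 - \ell_1$, the second factor becomes
\[
\sum_{|h| < L}\, \sum_m \psi_N(m)\,\psi_N(m + hr) \prod_{i, j = 1}^{2} \Kl\bigl(\overline{r}^2 a (m + b_j);\, su_i\bigr),
\]
with $b_1 = 0$ and $b_2 = hr$. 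The diagonal $h = 0$ contributes $O(LN)$ by Weil. For $h \neq 0$ the inner $m$-sum is precisely of the form handled by Lemma~\ref{lem:PV} (with multiplier $\overline{r}^2 a$), and since $(r, su_1 u_2) = 1$ we have $\gcd(b_1 - b_2, p) = \gcd(h, p)$ for every $p \mid su_1 u_2$.

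Finally I would sum $\mathcal{P}_{s, u_1, u_2}(N)$ over $0 < |h| < L$. Expanding the product $\prod_{p^k || s}\bigl(\gcd(u_1^2 - u_2^2, p)^{k/2} + \gcd(h, p)^{k/2}\bigr)$ into a sum over subsets of $\{p^k || s\}$ separates the $h$-independent factor $\gcd(u_1^2 - u_2^2, p)^{k/2}$ from the $h$-dependent factor $\gcd(h, p)^{k/2}$; each resulting $h$-sum of a multiplicative $\gcd(h, \cdot)^{1/2}$-function is $L \cdot d(su_1 u_2)^{O(1)}$, producing an off-diagonal bound
\[
L \cdot d^{O(1)} (su_1 u_2)^{1/2} \Bigl(1 + \frac{N}{su_1 u_2} \prod_{p^k || s} \gcd(u_1^2 - u_2^2, p)^{k/2}\Bigr).
\]
Dividing by $L^2$, combining with the diagonal, and then using $L \asymp N/r$ with $r \leq N$, the three terms $r$, $(su_1 u_2)^{1/2}$, and $N \prod \gcd /(su_1 u_2)^{1/2}$ in $|S|^2/(cN)$ match the target after square roots. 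I expect the main technical obstacle to be precisely this last $h$-averaging: the gcd-factors in $\mathcal{P}_{s, u_1, u_2}$ mix $h$-dependent and $h$-independent pieces inside a single product over $p^k || s$, and these must be disentangled cleanly via inclusion-exclusion so that only the pure product $\prod_{p^k || s} \gcd(u_1^2 - u_2^2, p)^{k/2}$ survives on average.
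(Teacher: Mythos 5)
Your proposal is correct and follows essentially the same route as the paper: twist multiplicativity to isolate the $r$-periodic factor, the $q$-van der Corput shift by multiples of $r$ with $L\asymp N/r$, Cauchy--Schwarz in $n$, completion via Lemma~\ref{lem:PV} with $b_j=k_jr$ (so $\gcd(b_1-b_2,p)=\gcd(h,p)$), and an $h$-average that strips the $\gcd(h,\cdot)$ factors while retaining $\prod_{p^k\|s}\gcd(u_1^2-u_2^2,p)^{k/2}$ (the paper uses the crude bound $A+B\ll AB$ where you use inclusion--exclusion, to the same effect). The only blemish is a bookkeeping slip in your displayed off-diagonal bound, which should carry $L^2$ rather than $L$ (each difference $h$ arises from up to $L$ pairs $(\ell_1,\ell_2)$); your final accounting of the three terms in $|S|^2/(cN)$ is consistent with the corrected version, so this does not affect the conclusion.
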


\begin{proof}
    We note that for any $k \in \Z$ by the twist multiplicative relation \eqref{eq:twistmultipl} and periodicity
    \begin{align*}
        &\Kl(a(n+kr);rsu_1) \Kl(a(n+kr);rsu_2) \\
        = &\Kl(a\overline{su_1}^2n;r) \Kl(a\overline{su_2}^2 n;r)\Kl(a\overline{r}^2(n+kr);su_1) \Kl(a\overline{r}^2(n+kr);su_2).
    \end{align*}
    Therefore, denoting $K = \lfloor N/r \rfloor $, we have by \eqref{eq:Weil}
    \begin{align*}
          &\bigg|\sum_{(n,r)=c} \psi_N(n) \Kl(an;rsu_1) \Kl(an;rsu_2)\bigg| \\
          &= \bigg| \frac{1}{K} \sum_{(n,r)=c}  \sum_{\substack{k \leq K }} \psi_N(n+kr) \Kl(n+kr;rsu_1) \Kl(n+kr;rsu_2)\bigg| \\
       &   \leq   \frac{c}{K} \sum_{\substack{n \ll N\\ (n,r)=c}}  \bigg| \sum_{\substack{k \leq K }}  \psi_N(n+kr) \Kl(a\overline{r}^2(n+kr);su_1) \Kl(a\overline{r}^2(n+kr);su_2) \bigg|.
    \end{align*}

    Applying Cauchy-Schwarz on $n$ we get
    \begin{align*}
        \sum_{(n,r)=c} \psi_N(n) \Kl(an;rsu_1) \Kl(an;rsu_2) \ll \frac{(cN)^{1/2}}{K}  \bigg( \sum_{n} \bigg| \sum_{\substack{k \leq K }} \bigg|^2 \bigg)^{1/2}
    \end{align*}
   and by the choice $K = \lfloor N/r \rfloor $ it suffices to show that
   \begin{align} \label{eq:claimafterCS}
       \sum_n \bigg| \sum_k \bigg|^2 \ll d(rsu_1u_2)^{O(1)} \Bigl( K^2  (su_1u_2)^{1/2} (1 + \frac{N\gcd(u_1^2-u_2^2,s)^{1/2 } }{su_1u_2})  + KN  \Bigr).
   \end{align}
By expanding the square, the sum we need to estimate becomes
    \begin{align*}
        \sum_{k_1,k_2 \leq K} \sum_{\substack{n \\ }} \psi_N(n+k_1r) \psi_N(n+k_2r) \prod_{i,j=1}^2\Kl(a\overline{r}^2(n+k_jr);su_i).
    \end{align*}
We apply Lemma \ref{lem:PV} to bound this by
\begin{align*}
    \ll &d(rsu_1u_2)^{O(1)}   (su_1u_2)^{1/2}   \\
    &\times \sum_{k_1,k_2 \leq K} \Bigl(1 + \frac{N}{ s u_1 u_2} \prod_{p^k|| u_1 u_2}\gcd(k_1-k_2,p)^{k/2}  \prod_{p^k|| s} \big( \gcd(u_1^2-u_2^2,p)^{k/2 } + \gcd(k_1-k_2,p)^{k/2} \Bigr).
\end{align*}
For $k'=k_1-k_2 \neq 0$ we crudely bound $\gcd(u_1^2-u_2^2,p)^{k/2 } + \gcd(k',p)^{k/2}  \leq \gcd(u_1^2-u_2^2,p)^{k/2} \gcd(k',p)^{k/2} \leq \gcd(u_1^2-u_2^2,p)^{k/2} \gcd(k',p^k)$ and obtain the estimate
\begin{align*}
    \sum_{k_1,k_2 \leq K}  &\leq K^2 + \frac{N}{su_1u_2} \bigg(K (su_1u_2)^{1/2}  +  K \sum_{0<|k'| \leq 2K} \gcd(k',su_1u_2)  \prod_{p^k|| s} \gcd(u_1^2-u_2^2,p)^{k/2 }\bigg)   \\
   & \leq  K^2 + \frac{N}{su_1u_2} (K (su_1u_2)^{1/2}  + d(su_1u_2) K^2 \prod_{p^k|| s} \gcd(u_1^2-u_2^2,p)^{k/2 } ) .
\end{align*}
This  gives the claim \eqref{eq:claimafterCS}.
\end{proof}

\section{Certain bilinear sums of Kloosterman sums}
In this section we apply Lemma \ref{lem:qvdc} to give the following bound for a bilinear sum of Kloosterman sums.
\begin{proposition} \label{le:klforms}
     Let $r,s \geq 1$ be coprime with $s$ cube-free and let $U> 1$. Let $\lambda_n,\gamma_u$ be complex coefficients with $\gamma_u$ supported on a subset of cube-free integers $\mathcal{U}$ such that for all $u_1,u_2 \in \mathcal{U}$ with $u_0=\gcd(u_1,u_2)$ we have $\gcd(u_1/u_0, u_0) = 1$. Denote $\widetilde{\gamma}_u = \gamma_u d(u)^{O(1)}$ and $\lambda^{(c)}_n := \lambda_n c^{1/4} \mathbf{1}_{c|n}$. Then uniformly in $a$ with $(a,rs)=1$ we have
    \begin{align*}
      &  \sum_{\substack{u \sim  U  \\ (u,ars)=1} } \gamma_u  \sum_{1 \leq n \leq N} \lambda_n  \Kl(an,rsu)  \ll d(rs)^{O(1)} \sum_{c|r}\|\lambda^{(c)}\|_2 \, \mathcal{K}_{r,s,\gamma}(U,N), \\ &
      \mathcal{K}_{r,s,\gamma}(U,N) =  N^{1/4} \|\widetilde{\gamma} \|_1 ( s^{1/8} U^{1/4}  +     r^{1/4} )  + N^{1/2} \Bigl( \sum_{u_0 \leq U}    \sum_{\substack{v_1,v_2 \sim  U/u_0 \\ (v_1,v_2)=1 } } |\widetilde{\gamma}_{u_0v_1}   \widetilde{\gamma}_{u_0v_1}|  \frac{\prod_{p^k|| s u_0} \gcd(v_1^2-v_2^2,p)^{k/4 }  }{(su_0v_1v_2)^{1/4} } \Bigr)^{1/2}. 
    \end{align*}
\end{proposition}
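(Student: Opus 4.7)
The plan is to apply Cauchy-Schwarz in $n$ to extract the coefficients $\lambda_n$, open the square on the $u$-sum, and then estimate the resulting sum of products of two Kloosterman sums by Lemma~\ref{lem:qvdc}. Since Lemma~\ref{lem:qvdc} requires the two auxiliary moduli to be coprime, we factor out $u_0=\gcd(u_1,u_2)$ from each pair and absorb it into the role of $s$; the hypothesis on $\mathcal{U}$ ensures that $v_1 = u_1/u_0$ and $v_2 = u_2/u_0$ are coprime and both coprime to $u_0$, and that $su_0$ remains cube-free.

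First I introduce a smooth majorant $\psi_N$ of $\mathbf{1}_{[1,N]}$ with $\psi_N^{(j)}\ll_j N^{-j}$, split the $n$-sum according to $c=\gcd(n,r)$, and apply Cauchy-Schwarz in $n$ for each $c\mid r$ to obtain
\begin{align*}
\Bigl|\sum_{(n,r)=c} \lambda_n \sum_{u\sim U}\gamma_u \Kl(an,rsu)\Bigr|
\le c^{-1/4}\|\lambda^{(c)}\|_2 \, T_c^{1/2},
\end{align*}
where
\begin{align*}
T_c = \sum_{u_1,u_2\sim U}\gamma_{u_1}\overline{\gamma_{u_2}} \sum_{(n,r)=c} \psi_N(n)\Kl(an,rsu_1)\Kl(an,rsu_2).
\end{align*}
The factor $c^{-1/4}$ arises from $\|\lambda^{(c)}\|_2 = c^{1/4}\bigl(\sum_{c\mid n}|\lambda_n|^2\bigr)^{1/2}$. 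Writing $u_i = u_0 v_i$ with $u_0 = \gcd(u_1,u_2)$ and applying Lemma~\ref{lem:qvdc} with the substitution $(r,s,u_1,u_2) \mapsto (r, su_0, v_1, v_2)$, the inner $n$-sum is bounded by
\begin{align*}
d(rsu_0v_1v_2)^{O(1)}(cN)^{1/2}\Bigl((su_0v_1v_2)^{1/4}\Bigl(1+\frac{N\prod_{p^k|| su_0}\gcd(v_1^2-v_2^2,p)^{k/2}}{su_0v_1v_2}\Bigr)^{1/2}+r^{1/2}\Bigr).
\end{align*}

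Using $(1+X)^{1/2}\le 1+X^{1/2}$ together with $(su_0v_1v_2)^{1/4}\le (su_1u_2)^{1/4}\ll s^{1/4}U^{1/2}$, and summing the $r^{1/2}$ and $s^{1/4}U^{1/2}$ contributions against $|\gamma_{u_1}\gamma_{u_2}| d(\cdot)^{O(1)}$ produces $(r^{1/2}+s^{1/4}U^{1/2})\|\widetilde\gamma\|_1^2$, while the remaining term yields $N^{1/2}\Sigma$ where $\Sigma$ is precisely the double sum appearing in $\mathcal{K}_{r,s,\gamma}$. Taking square roots, multiplying by $c^{-1/4}\|\lambda^{(c)}\|_2$, and summing over $c\mid r$ then gives the claimed bound, with divisor losses absorbed into $\widetilde\gamma_u$ and the $d(rs)^{O(1)}$ prefactor. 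The main care point is bookkeeping: verifying that $(u_1,u_2)\mapsto(u_0,v_1,v_2)$ yields a well-behaved parametrisation over the index set in the statement, that the pairwise coprimality of $r, su_0, v_1, v_2$ (and their coprimality with $a$) is automatic from the standing hypotheses, and that the $\gcd(v_1^2-v_2^2,p)$ factors pair up correctly to reproduce $\Sigma$.
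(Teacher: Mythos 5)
Your proposal is correct and follows essentially the same route as the paper's proof: split on $c=\gcd(n,r)$, Cauchy--Schwarz in $n$, open the square, substitute $u_i=u_0v_i$ with $u_0=\gcd(u_1,u_2)$, and apply Lemma~\ref{lem:qvdc} with $s\mapsto su_0$, using the support hypothesis on $\mathcal{U}$ to secure the coprimality and cube-freeness conditions. The bookkeeping you flag (the $c^{\pm 1/4}$ powers and the $(1+X)^{1/2}\le 1+X^{1/2}$ split producing the two terms of $\mathcal{K}_{r,s,\gamma}$) checks out against the paper's computation.
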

\begin{proof}
    We sort according to $c=(n,r)$ to write
    \begin{align*}
        \sum_{\substack{u \sim  U  \\ (u,ars)=1} } \gamma_u  \sum_{\substack{1 \leq n \leq N }}  \lambda_n  \Kl(an,rsu)  = \sum_{c|r} \sum_{\substack{u \sim  U  \\ (u,ars)=1} } \gamma_u  \sum_{\substack{1 \leq n \leq N \\ (n,r)=c}}   c^{-1/4}\lambda^{(c)}_n  \Kl(an,rsu) .
    \end{align*}
    By applying Cauchy-Schwarz on $n$ we get
    \begin{align*}
          \sum_{\substack{u \sim  U  \\ (u,ars)=1} } \gamma_u  \sum_{\substack{1 \leq n \leq N \\ (n,r)=c}} \lambda_n  \Kl(an,rsu)  \leq \sum_{c|r}\|\lambda^{(c)}\|_2 \bigg( \sum_{\substack{(n,r)=c}} \frac{\psi_N(n)}{\sqrt{c}} \bigg| \sum_{\substack{u \sim  U  \\ (u,ars)=1} } \gamma_u  \Kl(an,rsu) \bigg|^2 \bigg)^{1/2}.
    \end{align*}
Expanding the square and making the change of variables $u_j = u_0 v_j$ with $u_0=(u_1,u_2)$  we get
\begin{align*}
  \sum_{\substack{(n,r)=c}}  \bigg| \sum_{\substack{u \sim  U  } } \bigg|^2  = \sum_{u_0 \leq U} \sum_{\substack{v_1,v_2 \sim U/u_0 \\ (v_1,v_2)=1 \\ (u_0v_1v_2,ars)=1}} \gamma_{u_0v_1}   \gamma_{u_0v_1}  \sum_{\substack{(n,r)=c}} \frac{\psi_N(n)}{\sqrt{c}}  \Kl(an,rsu_0 v_1) \Kl(an,rsu_0v_2).
\end{align*} 
Using Lemma \ref{lem:qvdc} with $ s \mapsto s u_0$, making use of the support condition of $\gamma$ to guarantee $(v_1v_2,u_0)=1$, this is bounded by
\begin{align*}
  \ll &d(rs)^{O(1)} \Bigl(N^{1/2} \|\widetilde{\gamma} \|_1^2 (  s^{1/4} U^{1/2} + r^{1/2}) + N \sum_{u_0 \leq U}    \sum_{\substack{v_1,v_2 \sim  V/u_0 \\ (v_1,v_2)=1 } }  |\widetilde{\gamma}_{u_0v_1}   \widetilde{\gamma}_{u_0v_2}| \frac{  \prod_{p^k|| s u_0} \gcd(v_1^2-v_2^2,p)^{k/4 }}{(su_0v_1v_2)^{1/4} } \Bigr).
\end{align*}
\end{proof}

As an immediate Corollary, we obtain for the case of bounded $\gamma$ on square-free coefficients the following.
\begin{corollary} \label{cor:sqfree}
      Let $r,s \geq 1$ be coprime with $s$ cube-free and let $U,N> 1$. Let $\lambda_n,\gamma_u$ be divisor bounded complex coefficients with $\gamma_u$ supported on square-free integers. Then uniformly in $a$ with $(a,rs)=1$ we have
    \begin{align*}
         & \sum_{\substack{u \sim  U  \\ (u,ars)=1} } \gamma_u  \sum_{1 \leq n \leq N} \lambda_n  \Kl(an,rsu)  \\
         \ll&  d(rs)^{O(1)}(\log 2N U)^{O(1)}N^{1/2} \bigg( N^{1/4} ( s^{1/8} U^{5/4}  +     r^{1/4} U  )  
         +  N^{1/2} U^{1/2} + \frac{N^{1/2} U^{3/4}}{s^{1/8}}\bigg).
    \end{align*}
\end{corollary}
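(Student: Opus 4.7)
The plan is to apply Proposition \ref{le:klforms} and estimate each term in $\mathcal{K}_{r,s,\gamma}(U,N)$ using the divisor bound on $\lambda_n$ and $\gamma_u$. The support condition on $\gamma$ required in Proposition \ref{le:klforms} is automatic once $\gamma_u$ is supported on squarefree integers, since then $u_1/u_0$ and $u_0$ are coprime for any $u_1,u_2$ in the support.

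Since $\lambda$ is divisor bounded we have $\|\lambda^{(c)}\|_2 \ll c^{-1/4} N^{1/2} (\log N)^{O(1)}$, so $\sum_{c|r} \|\lambda^{(c)}\|_2 \ll N^{1/2} d(r)^{O(1)} (\log N)^{O(1)}$. For the first term of $\mathcal{K}_{r,s,\gamma}(U,N)$ the divisor bound and support of $\gamma$ give $\|\widetilde{\gamma}\|_1 \ll U (\log U)^{O(1)}$, yielding the contribution $N^{1/2} \cdot N^{1/4}(s^{1/8}U^{5/4} + r^{1/4}U)$ in the claim.

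For the second term, let $\Sigma$ denote the double sum inside the square root of $\mathcal{K}_{r,s,\gamma}(U,N)$. The target is $\Sigma \ll d(rs)^{O(1)}(\log UN)^{O(1)}(U + U^{3/2}/s^{1/4})$, since on taking square roots this yields the remaining two terms $N^{1/2}U^{1/2} + N^{1/2}U^{3/4}/s^{1/8}$. To achieve this, factor $s = s_A s_B^2$ with $s_A,s_B$ squarefree and coprime (so $s_A$ collects the primes dividing $s$ to odd power). Then for $u_0$ squarefree with $\gcd(u_0,s)=1$, setting $D=v_1^2-v_2^2$, we have
$$\prod_{p^k||su_0} \gcd(D, p)^{k/4} = \gcd(D, s_A u_0)^{1/4}\, \gcd(D, s_B)^{1/2}.$$
Now expand each factor via $\gcd(D,m)^\alpha \leq \sum_{d|m,\, d|D} d^\alpha$ (valid for squarefree $m$) and count pairs $(v_1,v_2)$ with $v_1,v_2 \sim V:=U/u_0$ and $d|v_1^2-v_2^2$ by residues modulo $d$, giving $O(2^{\omega(d)}(V^2/d + V))$ pairs for squarefree $d$. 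Using the identity $s_A^{1/4} s_B^{1/2} = s^{1/4}$, one obtains
$$A(u_0) := \sum_{v_1,v_2 \sim V} d(v_1 v_2)^{O(1)} \prod_{p^k||su_0}\gcd(D,p)^{k/4} \ll d(su_0)^{O(1)}\bigl(V^2 + (s u_0)^{1/4} V\bigr).$$

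Finally, inserting this into $\Sigma$ with the weight $(s u_0 v_1 v_2)^{-1/4} \asymp u_0^{1/4}/(s^{1/4} U^{1/2})$ for $v_1,v_2 \sim V$ and summing over $u_0 \leq U$ (using $\sum_{u_0} u_0^{-7/4} \ll 1$ and $\sum_{u_0 \leq U} u_0^{-1/2} \ll U^{1/2}$) gives the target bound on $\Sigma$. The main obstacle is the combinatorial step producing $A(u_0)$: the naive pointwise bound $\prod_{p^k||su_0} \gcd(D,p)^{k/4} \leq \gcd(D, su_0)^{1/2}$ wastes a factor of $s^{1/4}$ coming from the squareful primes of $s$ and produces a spurious term $s^{1/4}U^{5/4}$ larger than claimed when $s > U$; splitting $s$ as $s_A s_B^2$ above is what resolves this.
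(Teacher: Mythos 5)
Your proposal is correct, and its overall skeleton (invoke Proposition \ref{le:klforms}, bound $\sum_{c|r}\|\lambda^{(c)}\|_2\ll N^{1/2}d(r)^{O(1)}(\log N)^{O(1)}$ and $\|\widetilde\gamma\|_1\ll U(\log U)^{O(1)}$, then estimate the off-diagonal double sum) matches the paper. The one step where you genuinely diverge is the gcd sum: you split $s=s_As_B^2$ so as to write $\prod_{p^k\|su_0}\gcd(D,p)^{k/4}=\gcd(D,s_Au_0)^{1/4}\gcd(D,s_B)^{1/2}$, expand both gcds over divisors, and count solutions of $v_1^2\equiv v_2^2\pmod{d}$ directly, getting $A(u_0)\ll d(su_0)^{O(1)}(V^2+(su_0)^{1/4}V)$. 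The paper instead uses the crude pointwise bound $\prod_{p^k\|q}\gcd(D,p)^{k/4}\le\gcd(D,q)^{1/2}$ (valid since $q$ is cube-free), applies Cauchy--Schwarz in $(v_1,v_2)$, and substitutes $v_3=v_1-v_2$, $v_4=v_1+v_2$ with $\gcd(v_3v_4,q)\le\gcd(v_3,q)\gcd(v_4,q)$; the constraint $0<|v_3|,|v_4|\le 2V$ then kills all divisors $d>2V$, so no spurious term arises and one gets $\ll d(q)V^2(\log V)^{O(1)}$ off the diagonal, with the diagonal $v_1=v_2=1$ isolated as the $N^{1/2}U^{1/2}$ term. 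So your diagnosis that the crude bound forces the $s_As_B^2$ refinement is not quite right --- the paper's change of variables is an alternative fix, and in any case the ``spurious'' contribution $N^{1/2}s^{1/8}U^{5/8}$ would have been dominated by the first term $N^{3/4}s^{1/8}U^{5/4}$ of the corollary --- but your own argument is self-contained and yields the stated bound (your second route even gives the slightly cleaner explicit $(su_0)^{1/4}V$ term). One small point to make explicit: in $A(u_0)$ you carry the weight $d(v_1v_2)^{O(1)}$ through the congruence count $\#\{(v_1,v_2):d\mid v_1^2-v_2^2\}\ll 2^{\omega(d)}(V^2/d+V)$; since $d$ may exceed $V$, you should either insert a Cauchy--Schwarz in $(v_1,v_2)$ first (as the paper does) or run the count as $\sum_{v_2\sim V}d(v_2)^{O(1)}\sum_{v_1}d(v_1)^{O(1)}$ with a further Cauchy--Schwarz, at the cost of an extra $(\log V)^{O(1)}$ which is absorbed into the stated logarithmic factor.
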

\begin{proof}
    This follows by Proposition \ref{le:klforms} after using Cauchy-Schwarz and factoring $v_1^2-v_2^2 = (v_1-v_2)(v_1+v_2)=v_3v_4$ to get for any $V>1$ and $q=su_0$ cube-free
\begin{align*}
   \sum_{\substack{v_1,v_2 \sim  V \\ (v_1,v_2)=1 } }  d(v_1v_2)^{O(1)}  &\prod_{p^k|| q}\gcd(v_1^2-v_2^2,p)^{k/4}  \ll
   \sum_{\substack{v_1,v_2 \sim  V \\ (v_1,v_2)=1 } }  d(v_1v_2)^{O(1)}  \gcd(v_1^2-v_2^2,q)^{1/2}  \\
   &\ll V (\log V)^{O(1)} \bigg(  \sum_{\substack{v_1,v_2 \sim  V \\ (v_1,v_2)=1 } }  \gcd(v_1^2-v_2^2,q) \bigg)^{1/2} \\
  &  \ll V (\log V)^{O(1)} \bigg(   \sum_{\substack{0 < |v_3|,|v_4| \leq  2V  } }   \gcd(v_3,q) \gcd(v_4,q) \bigg)^{1/2}  \ll d(q) V^2 (\log V)^{O(1)}.
\end{align*}
Note that $v_1\neq \pm v_2$ except when $V=1$ and $v_1=v_2=1$ which contributes the term $N^{1/2} U^{1/2}$ in the corollary.
\end{proof}

For $u$ restricted to squares of square-free integers we get the following estimate.
\begin{corollary} \label{cor:squares}
      Let $r,s \geq 1$ be pair-wise coprime with $s$ cube-free and let $N,U> 1$. Let $\lambda_n,\gamma_u$ be divisor-bounded complex coefficients with $\gamma_u$ supported on square-free integers. Then uniformly in $a$ with $(a,rs)=1$ we have
    \begin{align*}
         &\sum_{\substack{u^2 \sim  U  \\ (u,ars)=1} } \gamma_u  \sum_{1 \leq n \leq N} \lambda_n  \Kl(an,rsu^2)  \\
        & \ll d(rs)^{O(1)}(\log 2NU)^{O(1)}N^{1/2} \bigg( N^{1/4} ( s^{1/8} U^{3/4}  +     r^{1/4} U^{1/2}  )  
         +  N^{1/2} U^{1/4} + \frac{N^{1/2} U^{1/4+o(1)} }{s^{1/8}}\bigg).
    \end{align*}
\end{corollary}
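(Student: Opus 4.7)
The strategy is to apply Proposition \ref{le:klforms} with the modulus variable restricted to $\mathcal{U} := \{m^2 : m \text{ square-free}\}$, reindexing the sum $\sum_{u^2 \sim U} \gamma_u \cdots$ of the corollary as a sum over $u' = m^2 \in \mathcal{U}$ with $u' \sim U$ and coefficient $\gamma'_{m^2} := \gamma_m$. The hypotheses of the proposition are satisfied: $\mathcal{U}$ consists of cube-free integers, and for $u'_1 = m_1^2, u'_2 = m_2^2 \in \mathcal{U}$ with $u'_0 = (u'_1, u'_2) = (m_1, m_2)^2 =: d^2$ one has $u'_1/u'_0 = (m_1/d)^2$ coprime to $u'_0 = d^2$ (since $(m_1/d, d) = 1$ by square-freeness of $m_1$). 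Writing $m_i = d a_i$ with $(a_1, a_2) = 1$ and $a_i$ square-free, we get $v_i := u'_i/u'_0 = a_i^2$ and $v_1^2 - v_2^2 = a_1^4 - a_2^4$.

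Since $\gamma$ is divisor-bounded and supported on $m \sim U^{1/2}$, we have $\|\widetilde{\gamma'}\|_1 \ll U^{1/2+o(1)}$. Substituting into the first piece of $\mathcal{K}_{r,s,\gamma'}$ gives $N^{1/4} U^{1/2+o(1)}(s^{1/8} U^{1/4} + r^{1/4})$, which, together with the $N^{1/2+o(1)}$ prefactor from $\sum_{c|r} \|\lambda^{(c)}\|_2$, accounts for the first two terms of the corollary. The contribution of $v_1 = v_2$ inside the off-diagonal sum is forced by $(v_1, v_2) = 1$ to be $v_1 = v_2 = 1$, i.e.\ $a_1 = a_2 = 1$ and $d \sim U^{1/2}$; this contributes $\sum_{d \sim U^{1/2}} |\widetilde{\gamma'}_{d^2}|^2 \ll U^{1/2+o(1)}$ inside the square root, yielding the third term $N^{1/2} U^{1/4+o(1)}$ after the prefactor.

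The main technical challenge lies in the true off-diagonal $v_1 \neq v_2$. Since $sd^2$ is cube-free, $k \leq 2$, so $\prod_{p^k || sd^2}(a_1^4 - a_2^4, p)^{k/4} \leq (a_1^4 - a_2^4, sd^2)^{1/2}$, reducing the matter to the uniform estimate
\begin{align*}
\sum_{\substack{a_1, a_2 \sim A \\ (a_1, a_2) = 1, \, a_1 \neq a_2}} (a_1^4 - a_2^4, q)^{1/2} \ll A^{2+o(1)} q^{o(1)},
\end{align*}
with $A := U^{1/2}/d$ and $q := sd^2$. The sparseness of squares is the crux: a naive divisor argument yields only $A^2 + Aq^{1/2}$, which is inadequate when $q > A^2$. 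The plan is to factor $a_1^4 - a_2^4 = (a_1^2 - a_2^2)(a_1^2 + a_2^2)$, apply $(xy, q)^{1/2} \leq (x, q)^{1/2}(y, q)^{1/2}$, and then use Cauchy--Schwarz to reduce to bounding the two sums $\sum (a_1^2 - a_2^2, q)$ and $\sum (a_1^2 + a_2^2, q)$. The first of these is handled via the $O(1)$-to-$1$ change of variables $(k_1, k_2) = (a_1 - a_2, a_1 + a_2)$ combined with the standard estimate $\sum_{k \leq K}(k, q) \ll K q^{o(1)}$; the second via the bound $r_2(n) \ll n^{o(1)}$ on representations as a sum of two squares, which turns the sum into $\sum_{n \leq 8A^2}r_2(n)(n,q)$ and reduces to the same divisor sum bound.

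Given this key estimate, the off-diagonal contribution to $\mathcal{K}_{r,s,\gamma'}$ simplifies as
\begin{align*}
\sum_{d \leq U^{1/2}} \frac{A^{1+o(1)} q^{o(1)}}{s^{1/4} d^{1/2}} \ll \frac{U^{1/2+o(1)}}{s^{1/4}} \sum_{d \leq U^{1/2}} d^{-3/2} \ll \frac{U^{1/2+o(1)}}{s^{1/4-o(1)}},
\end{align*}
and taking the square root, multiplying by $N^{1/2}$ from inside $\mathcal{K}$ and by the $N^{1/2+o(1)}$ prefactor produces the fourth term of the corollary, namely $N U^{1/4+o(1)}/s^{1/8} = N^{1/2} \cdot N^{1/2} U^{1/4+o(1)}/s^{1/8}$, completing the bound.
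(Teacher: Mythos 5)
Your proposal is correct and follows essentially the same route as the paper: apply Proposition \ref{le:klforms} to the set of squares of square-free integers and exploit the factorization of $v_1^2-v_2^2=a_1^4-a_2^4$ into $(a_1^2+a_2^2)(a_1-a_2)(a_1+a_2)$ to beat the naive gcd bound in the off-diagonal. The only cosmetic differences are that the paper uses $\sqrt{x}\sqrt{y}\le x+y$ where you use Cauchy--Schwarz, and your treatment of $\sum\gcd(a_1^2+a_2^2,q)$ via $r_2(n)\ll n^{o(1)}$ makes explicit a step the paper leaves terse.
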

\begin{proof}
    This follows by Proposition \ref{le:klforms}, using $d(v_1v_2) \leq U^{o(1)}$, and for $V>1$ using the factorization $v_1^4-v_2^4=(v_1^2+v_2^2)(v_1-v_2)(v_1+v_2)$ to get the bound
\begin{align*}
 \sum_{\substack{v_1^2,v_2^2 \sim V \\ (v_1,v_2)=1}}  \prod_{p^k|| q} \gcd(v_1^4-v_2^4,p)^{k/4 }  & \leq \sum_{\substack{v_1^2,v_2^2 \sim V \\ (v_1,v_2)=1}} \gcd(v_1^4-v_2^4,q)^{1/2}  \\
 &\leq  \sum_{\substack{v_1^2,v_2^2 \sim V \\ (v_1,v_2)=1}} \gcd(v_1^2+v_2^2,q)^{1/2}  \gcd((v_1-v_2)(v_1+v_2),q)^{1/2}   \\
    &\leq  \sum_{\substack{v_1^2,v_2^2 \sim V \\ (v_1,v_2)=1}} (\gcd(v_1^2+v_2^2,q)   +\gcd((v_1-v_2)(v_1+v_2),q))  \ll d(q)^2 V.
\end{align*}
\end{proof}

\begin{remark}
    Similar estimates as in this section can be shown for many other trace functions in place of $\Kl$. In particular, in the situation of \cite[Theorems 2.1 and 2.2]{KStrace}, one can show bounds for almost all integer moduli $q$  in ranges of $N$ slightly below $q^{1/2}$.
\end{remark}
\section{Proofs of the factorable estimates}
\label{sec:factorableproof}

\subsection{Proof of Theorem \ref{Thm:divisorAP}}
Recall that $\|\psi^{(j)}\|_\infty \ll_j  \delta^{-j}$. For any $T > 1$ we denote $N_T = \delta^{-2}  T^2 /X  $. By Vorono\"\i\, summation similarly as in \cite[Section 3]{FouvryIwaniec} and splitting $t$  into dyadic ranges $t\sim T$, we have
  \begin{align*}
        | \Delta^\psi(X;rsu,a)| 
          \ll & \max_{\substack{T \ll rs U \\ N > N_T}} \frac{X(\log X)^2}{rsu T^{1/2} (N/N_T)^2}   \sum_{\substack{t|rsu \\t \sim T}}  \bigg| \sum_{1 \leq n \leq N} \lambda_n  \Kl(an,t)  \bigg|
    \end{align*}
for some $\lambda_n$ with $|\lambda_n| \leq d(n)$ which do not depend on the moduli.  Writing $t=r_0 s_0  w$ with $u=vw$, we have $w\asymp W:=T/r_0s_0$.  Summing trivially over $v \asymp Ur_0s_0/T$, we get
\begin{align*}
    \sum_{\substack{u \sim  U \\ u \, \text{square-free} \\ (u,ars)=1 \\} } | \Delta^\psi(X;rsu,a)| 
    \ll&  \max_{\substack{T \ll rs U \\ N > N_T}} \frac{X(\log X)^2}{rs T^{3/2} (N/N_T)^2}\max_{\substack{r_0|r,s_0|s \\ T/U \ll r_0 s_0  \ll T }}  r_0s_0 \\
   & \times  \sum_{\substack{w \asymp  W \\ w \, \text{square-free} \\ (w,ar_0s_0)=1 \\} } \bigg| \sum_{1 \leq n \leq N} \lambda_n  \Kl(an,r_0s_0 w)  \bigg|.
\end{align*}
Applying Corollary \ref{cor:sqfree} we get
\begin{align*}
     &\sum_{\substack{w \asymp  W \\ w \, \text{square-free} \\ (w,ar_0s_0)=1 \\} } \bigg| \sum_{1 \leq n \leq N_T} \lambda_n  \Kl(an,r_0s_0 w)  \bigg|  \\
     \ll &d(rs)^{O(1)}(\log X)^{O(1)} N^{1/2} \bigg( N^{1/4} ( s_0^{1/8} W^{5/4}  +     r_0^{1/4} W  )  
         +  N^{1/2} W^{1/2} + \frac{N^{1/2} W^{3/4}}{s_0^{1/8}}\bigg).
\end{align*}
Plugging this in, we are left with bounding four terms separately. We first note that the bound is decreasing for $N> N_T$ so we substitute $N=N_T= \delta^{-2}T^2/X$ and recall that $W=T/r_0s_0$.  For the first term, we get (ignoring the factor of $\delta^{-2} (\log X)^{O(1)} d(crs)^{O(1)}$)
\begin{align*}
    &\ll \max_{T \ll rs U}  \frac{X}{rsT^{3/2}  }\max_{\substack{r_0|r,s_0|s \\ T/U \ll  r_0 s_0  \ll T }} r_0s_0   N_T^{3/4}  s_0^{1/8} W^{5/4}  \\
   & \ll  \frac{X^{1/4}}{rs} \max_{T \ll rs U}T^{5/4} \max_{\substack{r_0|r,s_0|s \\ T/U \ll r_0 s_0  \ll T }}   r_0^{-1/4} s_0^{-1/8} \\
   & \ll  \frac{X^{1/4}}{rs} \max_{T \ll rs U}T^{5/4}  \Bigl(1+ \frac{T}{U s}\Bigr)^{-1/8} \Bigl(1 + \frac{T}{U }\Bigr)^{-1/8}      \ll s^{1/8} U^{5/4}  X^{1/4}    
\end{align*}
by using $r_0s_0 > 1 + \frac{T}{U }$ and $r_0 > 1+ \frac{T}{U  s}$. For the second term we have
\begin{align*}
    & \ll \max_{T \ll rs U}  \frac{X}{rsT^{3/2}  }\max_{\substack{r_0|r,s_0|s \\ T/U \ll  r_0 s_0  \ll T }} r_0s_0 N_T^{3/4}   r_0^{1/4} W \\
    & \ll \frac{X^{1/4}}{rs } \max_{T \ll rs U}  T \max_{\substack{r_0|r,s_0|s \\ T/U \ll  r_0 s_0  \ll T }} r_0^{1/4}  \ll r^{1/4} U X^{1/4} .
\end{align*}
For the third, we get
\begin{align*}
  & \ll \max_{T \ll rs U} \frac{X}{rsT^{3/2}  }\max_{\substack{r_0|r,s_0|s \\ T/U \ll  r_0 s_0  \ll T }}  r_0s_0 N_T W^{1/2}\\
  & \ll  \max_{T \ll rs U} \max_{\substack{r_0|r,s_0|s \\ T/U \ll  r_0 s_0  \ll T }} \frac{1}{rs}T (r_0s_0)^{1/2}  \ll (rs)^{1/2} U
\end{align*}
and for the final part
\begin{align*}
    & \ll\max_{T \ll rs U} \frac{X}{rsT^{3/2}  }\max_{\substack{r_0|r,s_0|s \\ T/U \ll r_0 s_0  \ll T }}  r_0s_0 N_T\frac{ W^{3/4}}{s_0^{1/8}}\\
    & \ll \frac{1}{rs} \max_{T \ll rs U} T^{5/4} \max_{\substack{r_0|r,s_0|s \\ T/U \ll r_0 s_0  \ll T }}   r_0^{1/4} s_0^{1/8}
    \ll  r^{1/2} s^{3/8} U^{5/4}.
\end{align*}
Combining the four bounds gives the result.
\qed
\subsection{Proof of Theorem \ref{Thm:divisorAPsquares}}

Theorem \ref{Thm:divisorAPsquares} is proved similarly to Theorem \ref{Thm:divisorAP} but using Corollary \ref{cor:squares} in place of Corollary \ref{cor:sqfree}. A minor complication is that $(t,u^2)$ can have square-free factors. Similarly as above, with $N_T = \delta^{-2}  T^2 /X  $ we have 
\begin{align*}
    \sum_{\substack{u^2 \sim  U \\ u \, \text{square-free} \\ (u,ars)=1 \\} } | \Delta^\psi(X;rsu^2,a)|  
   \ll \sum_{\substack{u^2 \sim  U \\ u \, \text{square-free} \\ (u,ars)=1 \\} }  \max_{\substack{T \ll rs U \\ N > N_T}} \frac{X(\log X)^2}{rsu^2 T^{1/2} (N/N_T)^2}   \sum_{\substack{t|rsu^2 \\t \sim T}}  \bigg| \sum_{1 \leq n \leq N} \lambda_n  \Kl(an,t)  \bigg|.
\end{align*}
 We then write $t=r_0 s_0 w_0 w^2$ where $(t,u^2)=w_0w^2$ with $(w_0,w)=1$ and $w_0$ square-free, so that $u=v w_0 w$ with $w^2 \asymp W := T/(r_0 s_0 w_0)$.  We sum trivially over $v^2 \asymp Ur_0s_0/(T w_0) $  to get
\begin{align*}
      & \ll  \max_{\substack{T \ll rs U \\ N > N_T}} \frac{X(\log X)^2}{rsU^{1/2} T (N/N_T)^2} \max_{\substack{r_0|r,s_0|s  }} \sum_{\substack{w_0 < T/(r_0s_0)  \\  w_0 T/U \ll  r_0 s_0  \ll T \\ w_0 \, \text{square-free}} } \Bigl(\frac{r_0s_0}{ w_0}\Bigr)^{1/2}  \\
    & \hspace{180pt} \times\sum_{\substack{w^2 \asymp  W \\ w \, \text{square-free} \\ (w,ar_0s_0w_0)=1 \\} } \bigg| \sum_{1 \leq n \leq N} \lambda_n  \Kl(an,r_0s_0 w_0 w^2)  \bigg| \\
    &  \ll  \max_{\substack{T \ll rs U \\ N > N_T}} \frac{X(\log X)^3}{rsU^{1/2} T  (N/N_T)^2} \max_{\substack{r_0|r,s_0|s}} \max_{\substack{w_0 < T/(r_0s_0)  \\  w_0 T/U \ll r_0 s_0  \ll T \\ w_0 \, \text{square-free}} } ( r_0s_0w_0)^{1/2}  \\
     & \hspace{180pt}\times\sum_{\substack{w^2 \asymp  W \\ w \, \text{square-free} \\ (w,ar_0s_0w_0)=1 \\} } \bigg| \sum_{1 \leq n \leq N} \lambda_n  \Kl(an, r_0s_0 w_0 w^2)  \bigg|.
\end{align*}
An application of Corollary \ref{cor:squares} with $s=s_0w_0$ gives
\begin{align*}
    &\sum_{\substack{w^2 \asymp  W \\ w \, \text{square-free} \\ (w,ar_0s_0w_0)=1 \\} } \bigg| \sum_{1 \leq n \leq N} \lambda_n  \Kl(an,r_0s_0 w_0 w^2)  \bigg| \\
    &\ll d(rs)^{O(1)}(\log U)^{O(1)} N^{1/2}\bigg( N^{1/4} ( (s_0w_0)^{1/8} W^{3/4}  +     r_0^{1/4} W^{1/2}  )  
         +  N^{1/2} W^{1/4} + \frac{N^{1/2} W^{1/4+o(1)}}{(s_0w_0)^{1/8}}\bigg).
\end{align*}
The result then follows by similar computations as before, noting that the bound is decreasing in $w_0$ since $W=T/( r_0 s_0 w_0) \ll U/w_0^2$. \qed

\section{Proof of Theorem \ref{thm:APexceptional}} \label{sec:exceptionalproof}

We may assume that $\eps \in ( \frac{\log \log X}{ \log X},\frac{1}{1000})$  since otherwise the statement is trivial by increasing the implied constants. We define $P_1 < Q_1 < P_2 < Q_2$ via
\begin{align} \label{eq:PQdef}
    (P_1,Q_1,P_2,Q_2) = (X^{10 \eps}, X^{\frac{1}{2} \sqrt{\eps}}, X^{2 \sqrt{\eps}}, X^{1/12})
\end{align}
and define the set of good moduli
\begin{align} \label{eq:exceptionalset}
    \mathcal{Q} = \Bigl\{ p_1p_2 k \sim Q: p_1 \in (P_1,Q_1], p_2 \in (P_2,Q_2], \gcd(k,p_1p_2)=1 ] \Bigr\}.
\end{align}
We then have the following simple sieve upper bound for the exceptional set of moduli
\begin{align} \label{eq:sieve}
 \# \{q \sim Q: q \not \in \mathcal{Q}\}\ll Q\Bigl( \frac{\log P_1}{\log Q_1} + \frac{\log P_2}{\log Q_2}+\frac{1}{P_1}+\frac{1}{P_2}\Bigr) \ll \sqrt{\eps} Q.
\end{align}
For the good moduli we have
\begin{align*}
   \sum_{\substack{q \in \mathcal{Q}} }  |   \Delta(X;q,a)|   \leq     \sum_{\substack{p_2 \in (P_2,Q_2] }} \sum_{\substack{Q/(Q_1 p_2)\leq k \leq Q/(P_1 p_2) \\ p_2\nmid k} } \sum_{\substack{p_1 \sim Q/(k p_2)\\ p_1 \nmid k}}   |   \Delta(X;k p_1p_2,a) |.
\end{align*}
By a dyadic partition, it suffices to bound the analogous sum with $X < n  \leq 2X$ in place of $n \leq X$. We insert a smooth approximation $\psi(n/X)$ with $\delta=X^{-\eps}$ where the error term  coming from the edges is negligible by Shiu's bound \cite[Theorem 1]{Shiu}. We write $k=k_1 k_2$ with $k_1$ is cube-free and $k_2$ is cube-full (even a square-free split would suffice). An application of Theorem \ref{Thm:divisorAP} with $(r,s,u) = (k_2p_2,k_1,p_1)$ bounds the contribution from $q \in \mathcal{Q}$ by
\begin{align*}
\ll  & \delta^{-2}(\log X)^{O(1)}  \sum_{\substack{p_2 \in (P_2,Q_2] }} \sum_{\substack{Q/(Q_1 p_2)\leq k_1k_2 \leq Q/(P_1 p_2) \\ p|k_2 \Rightarrow p^3 |k_2 }} d(k_1 k_2)^{O(1)}  \\ & \times\Bigl(    k_1^{1/8} \bigl(\frac{Q}{k_1k_2 p_2}\bigr)^{5/4}X^{1/4}+
 (k_2p_2)^{1/4}  \frac{Q}{k_1 k_2 p_2}  X^{1/4} + ( k_1 k_2 p_2)^{1/2} \frac{Q}{k_1 k_2 p_2} + (k_2 p_2)^{1/2} k_1^{3/8}  \bigl(\frac{Q}{k_1k_2 p_2}\bigr)^{5/4} \Bigr) \\
  \ll & \delta^{-2}(\log X)^{O(1)} \Bigl(X^{1/4} Q^{9/8} Q_1^{1/8} P_2^{-1/8}+ X^{1/4} Q  Q_2^{1/4} + Q^{3/2}  P_1^{-1/2}  + Q^{11/8}P_1^{-1/8} Q_2^{1/8}\Bigr) \\
  \ll &   (\log X)^{O(1)} X^{1+ \eps 25/8   -\sqrt{\eps} 3/16 } + X^{11/12 + 3\eps + 1/48} + X^{1 +\eps 7/2 - 5 \eps } + X^{11/12 + \eps 27/8 - \eps 5/4 + 1/96}  \\
  \ll& (\log X)^{O(1)} X^{1-\eps} \quad \text{ for } \quad \eps < 1/1000.
\end{align*}
 \qed

\section{Proof of Theorem \ref{thm:binarycubic}} \label{sec:binarycubicproof}
We take  $\eps = A \frac{\log\log X}{ \log X}$ for some large $A>0$, let $P_1,Q_1,P_2,Q_2$ be as in \eqref{eq:PQdef}, and set
\begin{align*}
    \mathcal{N} :=  \{ p_1p_2 k \leq X: p_1 \in (P_1^{1/2},Q_1^{1/2}], p_2 \in (P_2^{1/2},Q_2^{1/2}], \gcd(k,p_1p_2)=1 ] \}.
\end{align*}
Then by Shiu's bound \cite[Theorem 1]{Shiu} followed by the sieve bound \eqref{eq:sieve} we have
\begin{align*}
\sum_{ \substack{n_2 \leq X \\ n_2 \not \in \mathcal{N}} }   \sum_{n_1 \leq X} d(n_1 n_2^2 +1)  \ll \Bigl(\frac{\log \log X}{ \log X} \Bigr)^{1/2} X^2 \log X.
\end{align*}
For $n_2 \in \mathcal{N}$ we argue similarly as Section \ref{sec:exceptionalproof} but using Theorem \ref{Thm:divisorAPsquares} to get, once $A$ is sufficiently large,
\begin{align*}
    \sum_{ \substack{n_2 \leq X \\ n_2 \in \mathcal{N}} }   \bigg|\sum_{n_1 \leq X} d(n_1 n_2^2 +1) - \sum_{m \leq n_2^2 X} d(m) \frac{\mathbf{1}_{(m,n_2)=1}}{ \varphi(n_2^2)} \bigg| \ll \frac{X^2}{ (\log X)^A}.
\end{align*}
It then remains to evaluate the main term, where we can restore the exceptional moduli $n_2 \not \in \mathcal{N}$ by another application of Shiu's bound \cite[Theorem 1]{Shiu} and \eqref{eq:sieve}. We get by Perron's formula and shifting the contour
\begin{align*}
        \sum_{ \substack{n_2 \leq X} } \frac{1}{ \varphi(n_2^2)} \sum_{m \leq n_2^2 X} d(m) \mathbf{1}_{(m,n_2)=1}  
      &= X \sum_{ \substack{n_2 \leq X} } \frac{\varphi(n_2) \log (X n_2^2) }{n_2}  + O(X^{2})  =\frac{3}{\zeta(2)} X^2 \log X + O(X^{2}).
\end{align*}
\qed 
\bibliography{divreferences}
\bibliographystyle{abbrv}

\end{document}